\documentclass[12pt,reqno]{amsart}
\usepackage{amsmath}
\usepackage{amsfonts}
\usepackage{amssymb,latexsym}
\usepackage{cite}
\usepackage[height=190mm,width=160mm]{geometry}
\usepackage{hyperref}
\theoremstyle{plain}
\newtheorem{theorem}{Theorem}
\newtheorem{lemma}{Lemma}
\newtheorem{corollary}{Corollary}
\newtheorem{proposition}{Proposition}

\theoremstyle{definition}
\newtheorem{definition}{Definition}
\theoremstyle{remark}

\numberwithin{equation}{section}
\usepackage{xcolor}

\begin{document}
\title[On bi-periodic Padovan and Perrin
quaternions]{On bi-periodic Padovan and Perrin
quaternions over finite fields}
\author{Diana SAVIN}
\address{Department of Mathematics and Computer Science, Transilvania
University of Brasov, 500091, Romania}
\email{diana.savin@unitbv.ro}
\author{Elif TAN}
\address{Department of Mathematics, Faculty of Science, Ankara University
06100 Tandogan Ankara, Turkey}
\email{etan@ankara.edu.tr}
\subjclass[2000]{ 11A07, 11B37, 11B39, 11R52, 16G30}
\keywords{Congruences, quaternions, Padovan numbers, Perrin numbers, zero divisor, finite fields, twin primes.}

\begin{abstract}
In this paper, we investigate bi-periodic Padovan and bi-periodic Perrin quaternions in the quaternion algebra \(Q_{\mathbb{Z}_p}\). We introduce the bi-periodic Perrin sequence and clarify its structural relationship with the bi-periodic Padovan sequence. By extending these sequences to the quaternion setting, we analyze their norm properties in the modular framework. For suitable choices of twin prime coefficients, we derive explicit criteria characterizing zero divisors and invertible elements in \(Q_{\mathbb{Z}_p}\).

\end{abstract}

\maketitle

\section{Introduction}

The Fibonacci sequence, denoted by $\{F_n\}_{n\ge 0}$, is defined by the second-order recurrence relation $F_{n}=F_{n-1}+F_{n-2},$ $n\geq 2$, with the initial conditions $F_{0}=0$ and $F_{1}=1$. 
%Its companion sequence, Lucas sequence $\left \{ L_{n}\right \} $ satisfies the same recurrence relation, but begins with the initial conditions $L_{0}=2$ and $L_{1}=1$.
This simple yet powerful recursive definition provides a fundamental model for describing various natural phenomena associated with growth and development. %Owing to its mathematical elegance and applicability, the Fibonacci sequence has found profound connections across a wide range of disciplines, including mathematics, computer science, physics, chemistry, biology, and even sociology. 
The study of generalizations of the Fibonacci sequence and higher-order recurrence relations has attracted considerable attention in the literature since they found profound connections across a wide range of disciplines, including mathematics, computer science, physics, chemistry, biology, and even sociology. One of the most notable third-order integer sequences in this context is
the Padovan sequence \cite{padovan}. The Padovan sequence $\{p_n\}_{n\ge 0}$, listed as A000931 in the On-Line Encyclopedia of Integer Sequences \cite{oeis}, is defined by $p_{n}=p_{n-2}+p_{n-3},$ $n\geq 3$, with the initial terms $p_{0}=1,p_{1}=0,$
and $p_{2}=1$. The first few terms of the sequence are:
$$1,\ 0,\ 1,\ 1,\ 1,\ 2,\ 2,\ 3,\ 4,\ 5,\ 7,\ 9,\ 12,\ 16,\ 21,\ 28,\ 37,\ 49,\ 65,\ 86,\ 114, \ldots .$$
The Perrin sequence $\{r_n\}_{n\ge 0}$, listed as sequence A001608 in \cite{oeis}, satisfies the same recurrence
relation as Padovan sequence but begins with the initial values
$r_0=3,\ r_1=0, \ \text{and} \ r_2=2.$
The first few terms of the sequence are:
$$
3,\,0,\,2,\,3,\,2,\,5,\,5,\,7,\,10,\,12,\,17,\,22,\,29,\,39,\,51,\,68,\,90,\,119,\,158,\,209,\,277,
 \ldots .$$
Padovan and Perrin sequences are related by the well-known relation $r_n = 3p_{n-3}+2p_{n-2}, \ n\ge 3.$
These sequences also have rich connections to different areas of mathematics, including graph theory and discrete structures. Their interplay with graph-based models has been investigated in several contexts; for a recent contribution in this direction, see \cite{padovangraph}. A detailed exposition of the Fibonacci sequence and related integer sequences, we refer to the books \cite{koshy1, koshy2}.

On the other hand, quaternions, originally introduced by Hamilton, can be
viewed as an extension of complex numbers and have found applications in
various fields such as computer science, physics, differential geometry, and
quantum mechanics. Let $F$ be a field with characteristic not $2.$ The generalized quaternion algebra over a field $F$ is defined as:
\begin{equation}
Q_{F}\left( s,t\right) =\left \{ x+yi+zj+wk\mid x,y,z,w\in
F,i^{2}=s,j^{2}=t,ij=-ji=k\right \}   \label{q}
\end{equation}
where $s$ and $t$ are nonzero elements of $F$. It is known that
the algebra $Q_{\mathbb{R}}\left( -1,-1\right)$ is the classical real quaternion algebra. Let $B$ be a finite-dimensional algebra. Then $B$ is a division algebra if and only if it has no zero divisors; that is, for any nonzero $x,y\in B$, we have $xy\neq 0$. If $H$ is a generalized quaternion algebra, then $H$ is a division algebra if and only if a quaternion with zero norm is necessarily the zero quaternion. In other
words, for $X\in Q_{F}\left( s,t\right) ,$ the norm of $X,$ denoted as $N(X)$
and defined as $N(X)=x^{2}-sy^{2}-tz^{2}+stw^{2}$ equals zero if and only if 
$X=0$. Otherwise, the algebra is called a split algebra. In particular, while the real quaternion algebra is a division algebra, the quaternion
algebra over the finite field $\mathbb{Z}_{p},$ denoted by $Q_{\mathbb{Z}_{p}}\left( -1,-1\right)$ and abbreviated as $Q_{\mathbb{Z}_p}$, is known to be a split algebra for any odd prime $p$; see \cite{grau}. Special elements in quaternion algebras over finite fields have also been studied in \cite{savin, savin0, savin1, mi, savinalgebra, savinoct}, to which we refer for further background.

In recent years, there has been growing interest in quaternion sequences defined via special integer sequences such as the Fibonacci, Lucas, Pell, Padovan, and Perrin sequences \cite{diskayanenken1, diskayanenken2}. The algebraic properties of these quaternion sequences over various quaternion algebras have been investigated by many authors. In particular, Horadam~\cite{horadam} defined the Fibonacci quaternions over the real quaternion algebra as
\[
Q_n = F_n + F_{n+1}i + F_{n+2}j+ F_{n+3}k,
\]
where \(F_n\) denotes the \(n\)-th Fibonacci number. Padovan quaternions were later introduced by Ta\c{s}c{\i}~\cite{tasci}. Further studies on quaternions associated with special integer sequences can be found in \cite{chaos1, chaos2, chaos3, isbilir, halici0, halici1, lee, tan-rosa, flautsavin, horadam, savin2019}. For quaternion sequences defined over finite fields, we refer the reader to \cite{savin-tan, tan-savin}.

In this paper, we consider the bi-periodic Padovan sequence $\{P_n\}_{n\ge 0}$, studied by Diskaya and Menken \cite{diskaya}, which satisfies the following recurrence relation:
\[P_n =\begin{cases}a P_{n-2} + P_{n-3}, & \text{if } n \text{ is even}, \\b P_{n-2} + P_{n-3}, & \text{if } n \text{ is odd},\end{cases}\quad \text{for } n \geq 3,\]with the initial values $P_0 = 1,\ P_1 = 0,\ \text{and} \ P_2 = a$.
It should be noted that Alp et al.~\cite{ternary} studied a general two-periodic ternary recurrence of the form
\[
\gamma_n =
\begin{cases}
a_1\gamma_{n-1}+a_2\gamma_{n-2}+a_3\gamma_{n-3},
& \text{if } n \text{ is even},\\
b_1\gamma_{n-1}+b_2\gamma_{n-2}+b_3\gamma_{n-3},
& \text{if } n \text{ is odd},
\end{cases}
\]
and derived the following sixth-order recurrence relation
\[
\gamma_n
=(a_1b_1+a_2+b_2)\gamma_{n-2}
+(a_1b_3-a_2b_2+a_3b_1)\gamma_{n-4}
+a_3b_3\gamma_{n-6}.
\]
Setting
$a_1=b_1=0,\ a_2=a,\ b_2=b,\ a_3=b_3=1,$
one obtains the recurrence relation
\begin{equation}
P_n=(a+b)P_{n-2}-abP_{n-4}+P_{n-6},\quad n\geq 6,
\end{equation} \label{rec1}which coincides with the recurrence underlying the bi-periodic sequence considered in the present paper. Our interest, however, lies in the specific Padovan-type structure arising from this choice and the corresponding quaternion sequences over finite fields.

From \cite{diskaya}, the generating function of the bi-periodic Padovan sequence is given by
\begin{equation}
G(x)=\frac{1-bx^{2}+x^{3}}{1-(a+b)x^{2}+abx^{4}-x^{6}}. \label{gf}
\end{equation}
Recently, G{\"u}ng{\"o}r et al. \cite{gungor} provided a combinatorial interpretation of these numbers using a weighted tiling approach.

While bi-periodic Padovan numbers have already been studied in the literature, an analogous bi-periodic extension of the Perrin sequence has not yet been systematically investigated. One of the main objectives of this paper is to introduce the \emph{bi-periodic Perrin sequence} and to establish its fundamental relationship with the bi-periodic Padovan sequence. Furthermore, another key motivation of this study is to consider the quaternion sequences associated with these recurrences, namely the bi-periodic Padovan and bi-periodic Perrin quaternions, and to examine the existence of zero divisors in the quaternion algebra \(Q_{\mathbb{Z}_p}\).

Understanding zero divisors in such split quaternion algebras over finite fields is a problem of intrinsic algebraic interest, linking number theoretic properties of sequences with the algebraic structure of quaternions. As a first step, we focus on the special modular setting where the twin prime coefficients $a=p-2$ and $b=p$ for a prime $p\ge 5$. It is worth emphasizing that the approach adopted in the Fibonacci quaternion case studied by Savin \cite{savin}, as well as in the Leonardo quaternion setting considered in \cite{savin-tan, tan-savin}, cannot be directly transferred to the bi-periodic Padovan framework. In those cases, the underlying recurrences lead to norm expressions with a relatively uniform structure, allowing zero divisors and invertible elements to be characterized without imposing strong restrictions on the defining parameters. By contrast, the bi-periodic nature of the Padovan and Perrin sequences introduces alternating coefficients and non-consecutive index interactions, which significantly complicate the norm structure. This structural difference motivates the use of specific coefficient choices, such as twin prime pairs. In particular, the choice of the twin-prime coefficients $a=p-2$ and $b=p$ results in a simpler modular structure for the bi-periodic recurrence and its associated quaternion norms. This choice makes the zero-divisor conditions sufficiently tractable to obtain explicit characterizations for a general prime $p$, while retaining the essential bi-periodic structure of the sequences. On the other hand, in those papers the zero divisors were determined only for certain special values of the prime $p$, whereas the approach adopted here applies to a general prime $p$, which we believe constitutes one of the main strengths of this paper. In this context, we explicitly determine which bi-periodic Padovan and Perrin quaternions are zero divisors in 
$Q_{\mathbb{Z}_p}$. We then provide a detailed analysis of the zero divisor structure in the specific cases $p=7$, $p=13$, and $p=181$ for the bi-periodic Perrin quaternions. In particular, we show that for $p=13$, all bi-periodic Perrin quaternions are invertible.

%%%%%%%%%%%%%%%%%%%%%%%%%%%%%%%%%%%%%%%%%%%5
\section{Bi-periodic Padovan quaternions over finite fields}

In this section, we introduce bi-periodic Padovan quaternions and present some of their basic properties, including the recurrence relation and generating function. Then, by fixing a specific twin prime parameterization, we study their norm structure and derive criteria for the existence of zero divisors in \(Q_{\mathbb{Z}_p}\).

\begin{definition}
The bi-periodic Padovan quaternion sequence $\{QP_n\}_{n\ge 0}$ is defined as
\begin{equation*}
QP_{n}=P_{n}+P_{n+1}i+P_{n+2}j+P_{n+3}k
\end{equation*}
where $P_{n}$ is the $n$-th bi-periodic Padovan number.
\end{definition}
The first few terms can be seen as follows:
\begin{eqnarray*}
QP_{0} &=&1+aj+k, \\
QP_{1} &=&ai+j+a^{2}k, \\
QP_{2} &=&a+i+a^{2}j+(a+b)k, \\
QP_{3} &=&1+a^{2}i+(a+b)j+(a^{3}+1)k, \\
QP_{4}
&=&a^{2}+(a+b)i+(a^{3}+1)j+(a^{2}+ab+b^{2})k,
\\
QP_{5}
&=&a+b+(a^{3}+1)i+(a^{2}+ab+b^{2})j+((a+b)+a(a^{3}+1))k.
\end{eqnarray*}

Now we derive the quaternion analogues of the identities satisfied by the bi-periodic Padovan numbers, as given in relations \eqref{rec1} and \eqref{gf}. The bi-periodic Padovan quaternions satisfy the following recurrence relation:
\begin{equation} \label{rec0}
Q{P_{n}}=(a+b)QP_{n-2}-ab\,QP_{n-4}+QP_{n-6}{,\  \ n\geq }{6.}
\end{equation}

From the recurrence relation (\ref{rec0}), the generating function of the bi-periodic Padovan quaternion sequence is given by
\[
G(x)=\sum_{n=0}^{\infty}QP_{n}x^{n}
=
\frac{
A(x)+B(x)i+C(x)j+D(x)k
}{
1-(a+b)x^{2}+abx^{4}-x^{6}
}
\]
where
\[
\begin{aligned}
A(x) &= 1-bx^{2}+x^{3},\\
B(x) &= ax+x^{2}-abx^{3}+x^{5},\\
C(x) &= a+x-abx^{2}+x^{4},\\
D(x) &= 1+a^{2}x+(1-a^{2}b)x^{3}+ax^{5}.
\end{aligned}
\]

%%%%%%%%%%%%%%%%%%%%%%%%%%%%%%%%%%%%%%%%%%%%%%%%
Next, we consider the quaternion algebra \(Q_{\mathbb{Z}_p} = Q_{\mathbb{Z}_p}(-1,-1)\) over the finite field \(\mathbb{Z}_p\). We focus on bi-periodic Padovan quaternions with the twin prime coefficient choice \(a=p-2\) and \(b=p\), and analyze their norm structure in the modular setting. Using these norms, we determine the zero divisors in \(Q_{\mathbb{Z}_p}\).

From the definition of bi-periodic Padovan sequence with coefficients \(a=p-2\) and \(b=p\) for odd indices, we have
$P_{2k+1}=pP_{2k-1}+P_{2k-2}\equiv P_{2k-2}\pmod{p}.$ Replacing \(k\) by \(k+1\) yields
\begin{equation}
P_{2k}\equiv P_{2k+3} \pmod {p} \label{p}
\end{equation}

We begin with a useful result that will help us determine the zero divisors. To do this, first we give the following lemma.
\begin{lemma} \label{l}For the bi-periodic Padovan sequence with twin prime coefficients $a=p-2$ and $
b=p$, we have
 \begin{equation*}
    P_{2k} \equiv (-1)^k \sum_{i=0}^{\lfloor k/3 \rfloor} (-1)^i \binom{k-2i}{i} 2^{\,k-3i} \pmod{p}.
\end{equation*}   
\end{lemma}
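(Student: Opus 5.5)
Modulo $p$ we have $a\equiv -2$ and $b\equiv 0$. For even indices the recurrence reads $P_{2k}=aP_{2k-2}+P_{2k-3}\equiv -2P_{2k-2}+P_{2k-3}\pmod p$. Relation \eqref{p} in the form $P_{2k-3}\equiv P_{2k-6}$ (valid for $k\ge 3$) turns this into a recurrence involving only even-indexed terms. Writing $u_k:=P_{2k}\bmod p$, we get
\[
u_k\equiv -2u_{k-1}+u_{k-3}\pmod p,\qquad k\ge 3 .
\]
The plan is to solve this linear recurrence modulo $p$ by generating functions and then to read off the coefficient, which should be exactly the stated alternating binomial sum.

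First we fix the initial values. We have $u_0=P_0=1$ and $u_1=P_2=a\equiv -2$. Next, $P_3=bP_1+P_0=1$ and $P_4=aP_2+P_1=a^2\equiv 4$, so $u_2\equiv 4$. The formula gives $1$, $-2$ and $(-1)^2 2^2=4$ at $k=0,1,2$, so it agrees there. We can also note that $u_k\equiv -2u_{k-1}+u_{k-3}$ holds for $k=2$ if we set $u_{-1}:=0$: indeed $4=-2(-2)+0$.

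Second, we form $U(x)=\sum_{k\ge0}u_kx^k$. Since the recurrence $u_k+2u_{k-1}-u_{k-3}=0$ holds for $k\ge1$ once we take $u_{-1}=u_{-2}=0$ (check $k=1$: $-2+2=0$), we get
\[
U(x)\equiv \frac{1}{1+2x-x^3}.
\]
We expand this as a geometric series in $y=-2x+x^3=x(-2+x^2)$:
\[
\frac1{1-y}=\sum_{m\ge0}x^m(-2+x^2)^m=\sum_{m\ge0}\sum_{i=0}^{m}\binom{m}{i}(-2)^{m-i}x^{m+2i}.
\]
Extracting the coefficient of $x^k$ forces $m=k-2i$, with $i\le \lfloor k/3\rfloor$ because $m\ge i$. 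The coefficient is
\[
\sum_i\binom{k-2i}{i}(-2)^{k-3i}=(-1)^k\sum_i(-1)^{i}\binom{k-2i}{i}2^{k-3i},
\]
using $(-1)^{k-3i}=(-1)^k(-1)^{i}$. This is precisely the claimed expression. An equivalent route is induction on $k$, verifying that the right-hand side satisfies the same recurrence via Pascal's rule, but the generating function computation is cleaner.

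The main obstacle is bookkeeping rather than ideas. We must check carefully that relation \eqref{p} is applied only where it is valid, namely $P_{2k-3}\equiv P_{2(k-3)+3}$, which needs $k\ge3$. We must also check that the low-index cases $k=1,2$ fit the generating function with zero padding, since otherwise the numerator of $U(x)$ would not be $1$. Finally, the sign bookkeeping $(-2)^{k-3i}$ versus $(-1)^{k+i}2^{k-3i}$ has to be tracked correctly.
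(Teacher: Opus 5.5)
Your proof is correct, and the induction you mention as an alternative is exactly the paper's argument. Both proofs start the same way: they use \eqref{p} to collapse the even-index recurrence to $P_{2k}\equiv -2P_{2k-2}+P_{2k-6}\pmod p$. After that the routes differ.

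\begin{itemize}
\item The paper takes the closed form as given and verifies it by strong induction. It shifts $i\mapsto i-1$ in the sum coming from $P_{2k-4}$ and merges the two sums with Pascal's identity.
\item You derive the formula instead. You show $U(x)\equiv 1/(1+2x-x^3)$ and expand in powers of $x(x^2-2)$.
\end{itemize}

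The induction is shorter to write. However, it does not explain where the formula comes from, and it leaves the boundary terms of the merged sums implicit, for instance $\binom{k-2i}{i}=0$ at $i=\lfloor (k+1)/3\rfloor$ when $k\equiv 2\pmod 3$. Your route handles the initial conditions once, through the zero-padding check at $k=1,2$. After that, the coefficient extraction is mechanical, with the bound $i\le\lfloor k/3\rfloor$ coming from $m=k-2i\ge i$.

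Your route also gives more. Since $1-2x+x^3=(1-x)(1-x-x^2)$, the series $U(-x)$ has $k$-th coefficient $\sum_{j=0}^{k}F_{j+1}=F_{k+3}-1$. This yields the paper's next Proposition, $P_{2k}\equiv(-1)^k(F_{k+3}-1)\pmod p$, immediately. The paper only asserts that identity ``from the recurrence relation of the Fibonacci sequence'' without proving it.
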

\begin{proof}
 The proof can be done by mathematical induction over $k$. For initial terms we have $P_{0} \equiv 1 \pmod{p}$, $P_{2} \equiv p-2 \pmod{p}$, and $P_{4} \equiv (p-2)^2 \pmod{p}$, which agree with the stated formula. Assume that for some \(k\ge 2\), the statement holds for every integer \(j\le k\). Now we prove that the formula holds for $k+1$. Using the recurrence relation of bi-periodic Padovan sequences for even indices, we have
$P_{2k+2}\equiv -2P_{2k}+P_{2k-1}\pmod{p}$. From (\ref{p}), we have $P_{2k-1}\equiv P_{2k-4} \pmod {p}$. Thus, we obtain \[
P_{2k+2}\equiv -2P_{2k}+P_{2k-4}\pmod{p}.
\]
Now applying the induction hypothesis, we get
\[
\begin{aligned}
P_{2k+2}
&\equiv  (-1)^{k+1}\sum_{i=0}^{\lfloor k/3 \rfloor} (-1)^i \binom{k-2i}{i}2^{\,k+1-3i}\\
&+ (-1)^{k}\sum_{i=0}^{\lfloor (k-2)/3 \rfloor} (-1)^i \binom{k-2-2i}{i}2^{\,k-2-3i}\pmod{p}.
\end{aligned}
\]
By replacing \(i\) by \(i-1\) in the second sum and using Pascal's identity
$\binom{k+1-2i}{i}=\binom{k-2i}{i}+\binom{k-2i}{i-1},$ we obtain
\[
P_{2k+2} \equiv (-1)^{k+1}\sum_{i=0}^{\lfloor (k+1)/3 \rfloor} (-1)^i \binom{k+1-2i}{i}2^{\,k+1-3i}
\pmod{p},
\]
which completes the proof.
\end{proof}

\begin{proposition} \label{1} For the bi-periodic Padovan sequence with twin prime coefficients $a=p-2$ and $
b=p$, we have
\begin{equation*}
P_m \equiv 
\begin{cases}
(-1)^k \left(F_{k+3}-1\right) \pmod{p}, & \text{if $m=2k$},\\[1mm]
(-1)^{k-1} \left(F_{k+2}-1\right) \pmod{p}, & \text{if $m=2k+1$}.
\end{cases}
\end{equation*} 
\end{proposition}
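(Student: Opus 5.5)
Let $E_k := P_{2k} \bmod p$. In the proof of Lemma~\ref{l} we obtained the reduced recurrence
\[
E_{k+1}\equiv -2E_k+E_{k-2}\pmod p, \qquad k\ge 2,
\]
with $E_0=1$, $E_1\equiv -2$, $E_2\equiv 4$. The plan is to prove the even case by strong induction on $k$, showing that $T_k:=(-1)^k(F_{k+3}-1)$ satisfies the same recurrence and the same initial values. Then I will deduce the odd case from relation (\ref{p}).

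\textbf{Initial values.} We have $T_0=F_3-1=1$, $T_1=-(F_4-1)=-2$ and $T_2=F_5-1=4$, which agree with $E_0,E_1,E_2$.

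\textbf{Recurrence for $T_k$.} I need $T_{k+1}=-2T_k+T_{k-2}$. Multiplying by $(-1)^{k+1}$, this is equivalent to
\[
F_{k+4}-1 = 2(F_{k+3}-1) - (F_{k+1}-1),
\]
that is, $F_{k+4}=2F_{k+3}-F_{k+1}$. This holds because $2F_{k+3}-F_{k+1}=F_{k+3}+F_{k+2}=F_{k+4}$. The constants also match, since $-1=-2+1$.

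The two sequences thus agree for $k=0,1,2$ and obey the same third-order recurrence. By strong induction they agree for all $k$, which gives the case $m=2k$. Alternatively, one could derive the closed form from Lemma~\ref{l} using the convolution identity for $\sum (-1)^i\binom{k-2i}{i}2^{k-3i}$. The recurrence route avoids that identity, so I would use it.

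\textbf{Odd case.} Relation (\ref{p}) gives $P_{2k+1}\equiv P_{2k-2}=P_{2(k-1)}$ for $k\ge1$. By the even case,
\[
P_{2k+1}\equiv(-1)^{k-1}(F_{k+2}-1),
\]
which is the stated formula. For $k=0$ the formula gives $-(F_2-1)=0=P_1$, so this case is checked directly.

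\textbf{Main obstacle.} The only delicate point is making sure the reduced recurrence is valid from $k=2$ on and that all three initial values are checked, including the boundary case $k=0$ in the odd formula. The Fibonacci identity itself is routine.
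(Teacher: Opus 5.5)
Your proof is correct, and it reaches the even case by a more direct route than the paper. The paper first proves the binomial closed form of Lemma~\ref{l}, $P_{2k}\equiv(-1)^k\sum_{i=0}^{\lfloor k/3\rfloor}(-1)^i\binom{k-2i}{i}2^{k-3i}\pmod p$. It then only asserts the identity $\sum_{i=0}^{\lfloor k/3\rfloor}(-1)^i\binom{k-2i}{i}2^{k-3i}=F_{k+3}-1$, citing ``the recurrence relation of the Fibonacci sequence'' without further argument. The odd case is then read off from relation~(\ref{p}), exactly as you do.

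You skip the binomial closed form and use only the reduced recurrence $P_{2k+2}\equiv-2P_{2k}+P_{2k-4}$, which you correctly restrict to $k\ge 2$. You check that $(-1)^k(F_{k+3}-1)$ satisfies it, via $2F_{k+3}-F_{k+1}=F_{k+4}$, and has the same three initial values. This is shorter and self-contained. In effect it supplies the argument the paper leaves implicit: both sides of the paper's binomial identity satisfy $S_{k+1}=2S_k-S_{k-2}$ with $S_0,S_1,S_2=1,2,4$.

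The paper's detour yields an explicit binomial-sum formula for $P_{2k}\bmod p$, with the combinatorial identity as a by-product. For the proposition itself, though, Lemma~\ref{l} is not needed. You also check the boundary case $k=0$ of the odd formula directly. The paper passes over it, even though $P_{2k+1}\equiv P_{2k-2}$ only makes sense for $k\ge 1$.
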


\begin{proof} If $m=2k$, from Lemma \ref{l} and from the recurrence relation of the Fibonacci sequence, we have 
\[
\sum_{i=0}^{\lfloor k/3 \rfloor} (-1)^i \binom{k-2i}{i}2^{\,k-3i} = F_{k+3} - 1.
\] 
Thus, we get 
\[
P_{2k} \equiv (-1)^k \left(F_{k+3} - 1\right) \pmod{p}.
\]

Similarly, for $m = 2k+1$, using \eqref{p}, we have
$P_{2k+1} \equiv P_{2k-2} \pmod{p},$ which immediately yields the desired result.
\end{proof}

We recall below some known results concerning the Legendre symbol, the entry point and the Pisana period, which will be used throughout the paper. These definitions and results can be found in \cite{renault, savin, savinbook, vinson, wall}.

Let $a$ be an integer and let $p$ be an odd prime integer. An integer $a$ is a quadratic residue modulo $p$ if the congruence
$x^2\equiv a\pmod p$ has an integer solution; otherwise, $a$ is called a quadratic nonresidue modulo $p$. 
We denote by  $\left(\frac{a}{p}\right)$ the Legendre symbol of $a$ modulo $p$, defined by
$$\displaystyle\left(\dfrac{a}{p}\right)=\left\{
\begin{array}{rl}
1, & \hbox{if $p$ does not divide $a$ and $a$ is a quadratic residue mod $p$,} \\
-1, & \hbox{if $p$ does not divide $a$ and $a$ is a quadratic nonresidue mod $p$,} \\
0, & \hbox{if $p$ divides $a$.}
\end{array}
\right.
$$

Let $p$ be a positive odd prime. If $F_{z}$ is the smallest non-zero Fibonacci
number with the property $p\mid F_{z}$, then $z=z\left( p\right) $ is
defined as the \textit{entry point of }$p$ in the Fibonacci sequence. For a positive integer $m$, the \emph{Pisano period} $\pi(m)$ is defined as the smallest positive integer $k$ such that $F_{k} \equiv 0 \pmod{m}
\ \text{and}\ F_{k+1} \equiv 1 \pmod{m}$.

\noindent

\begin{lemma}\label{2}
The following statements hold:
\begin{itemize}
\item[(i)] $z(p)\mid \pi(p)$;
\item[(ii)] $\pi(p)=z(p)$ if $z(p)\equiv 2 \pmod{4}$;
\item[(iii)] $\pi(p)=2z(p)$ if $z(p)\equiv 0 \pmod{4}$;
\item[(iv)] $\pi(p)=4z(p)$ if $z(p)$ is odd;
\item[(v)] $p \mid F_m$ if and only if $z(p)\mid m$, for all $m\in\mathbb{N}$.
\end{itemize}
\end{lemma}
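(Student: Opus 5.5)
Lemma~\ref{2} collects classical facts about the Fibonacci sequence modulo an odd prime $p$; the paper cites them from \cite{renault, vinson, wall}. To prove them, I will use the matrix $M=\begin{pmatrix}1&1\\1&0\end{pmatrix}$, which satisfies $M^n=\begin{pmatrix}F_{n+1}&F_n\\F_n&F_{n-1}\end{pmatrix}$. The basic tools are the addition formula $F_{m+n}=F_mF_{n+1}+F_{m-1}F_n$ and Cassini's identity $F_{n+1}F_{n-1}-F_n^2=(-1)^n$.

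\textbf{Step 1: part (v).} Write $z=z(p)$. Then $F_z\equiv 0$, so $M^z\equiv F_{z+1}I \pmod p$, where $I$ is the identity matrix. Hence $M^{qz+r}\equiv F_{z+1}^q M^r$ and $F_{qz+r}\equiv F_{z+1}^qF_r \pmod p$. Cassini at $n=z$ gives $F_{z+1}F_{z-1}\equiv(-1)^z$, so $F_{z+1}$ is a unit modulo $p$. Therefore $p\mid F_m$ iff $p\mid F_r$ with $0\le r<z$, and by minimality of $z$ this forces $r=0$.

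\textbf{Step 2: part (i) and the multiplier.} By definition $\pi(p)$ is the order of $M$ in $GL_2(\mathbb{Z}_p)$. Since $F_{\pi}\equiv 0$, part (v) gives $z\mid\pi(p)$, which is (i). Put $\lambda=F_{z+1}$, so that $M^z\equiv\lambda I$. Then $\pi(p)=z\cdot\operatorname{ord}(\lambda)$, where $\operatorname{ord}(\lambda)$ is the multiplicative order of $\lambda$ modulo $p$. Indeed, $M^{kz}\equiv\lambda^kI$, and any $n$ with $M^n\equiv I$ is a multiple of $z$.

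\textbf{Step 3: computing $\operatorname{ord}(\lambda)$.} From $F_{z-1}=F_{z+1}-F_z\equiv\lambda$ and Cassini we get $\lambda^2\equiv(-1)^z$.
\begin{itemize}
\item If $z$ is odd, then $\lambda^2\equiv-1$, so $\lambda$ has order $4$ (as $p$ is odd, $-1\not\equiv1$). This gives (iv).
\item If $z$ is even, then $\lambda=\pm1$, and we must decide the sign. Write $z=2w$. Then $F_{2w}=F_w(F_{w+1}+F_{w-1})=F_wL_w$, where $L_w$ is the Lucas number. Since $p\nmid F_w$ by minimality, we get $L_w\equiv0$, i.e.\ $F_{w-1}\equiv -F_{w+1}$. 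Also $F_{2w+1}=F_{w+1}^2+F_w^2$. Using $F_w=F_{w+1}-F_{w-1}\equiv 2F_{w+1}$ and Cassini at $w$, namely $-F_{w+1}^2-F_w^2\equiv(-1)^w$, we obtain
\[
\lambda=F_{z+1}\equiv F_{w+1}^2+F_w^2\equiv-(-1)^w=(-1)^{w+1}.
\]
\end{itemize}

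\textbf{Step 4: conclusion.} If $z\equiv2\pmod4$, then $w$ is odd, so $\lambda=1$ and $\pi(p)=z$; this is (ii). If $z\equiv0\pmod4$, then $w$ is even, so $\lambda=-1$ and $\pi(p)=2z$; this is (iii).

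The main obstacle is the sign determination in Step 3 for even $z$. The route is the factorization $F_{2w}=F_wL_w$ combined with Cassini at the half index $w$, which pins $\lambda$ to $(-1)^{w+1}$ and so separates cases (ii) and (iii).
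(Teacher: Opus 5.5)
Your proof is correct and complete. The paper does not prove this lemma; it quotes the five facts from Wall, Vinson and Renault. So your argument stands in for a citation rather than competing with an argument in the text.

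In substance it is the classical proof from those sources. It uses the multiplier $\lambda=F_{z+1}$ with $M^{z}\equiv\lambda I$, then Cassini's identity to get $\lambda^{2}\equiv(-1)^{z}$. For $z=2w$ it uses the factorization $F_{2w}=F_wL_w$ together with Cassini at the half index to obtain $\lambda\equiv(-1)^{w+1}$.

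What it buys over the citation is transparency about where the hypotheses on $p$ enter:
\begin{itemize}
\item Oddness of $p$ is used only to separate $-1$ from $1$, so that $\mathrm{ord}(\lambda)$ is exactly $2$ or $4$. For $p=2$ one has $z(2)=\pi(2)=3$, so (iv) genuinely fails there.
\item Primality is what makes the step $p\mid F_wL_w,\ p\nmid F_w\Rightarrow p\mid L_w$ work. For the modulus $8$ this step breaks down: $8\mid F_3L_3=8$ but $8\nmid L_3=4$. Correspondingly, $z(8)=6\equiv 2\pmod 4$ while $\pi(8)=12$, so (ii) fails for that composite modulus.
\end{itemize}

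Two cosmetic points. First, state $F_{z-1}=F_{z+1}-F_z\equiv F_{z+1}$ already in Step 1, where you first write $M^{z}\equiv F_{z+1}I$; at present it only appears in Step 3. Second, drop the congruence $F_w\equiv 2F_{w+1}$ in Step 3, since it is never used.
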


We are now ready to state one of our main results, which gives a complete characterization of when a bi-periodic Padovan quaternion with twin prime coefficients becomes a zero divisor in \( Q_{\mathbb{Z}_p}\).

\begin{theorem}\label{1}
Let $m$ be an even integer and $\frac{m}{2} \equiv -3 \pmod{z\left( p\right)}$. Then, the bi-periodic Padovan quaternion $QP_{m}$ with twin prime coefficients $a = p-2$ and $b = p$ is a zero divisor in the quaternion algebra $Q_{\mathbb{Z}_p}$ if and only if $p \equiv 1 \pmod{4}$ and $\frac{m}{2}\equiv s-2 \pmod{\pi\left( p\right)},$ where $s\in \left\{z\left(p\right)-1, 2z\left(p\right)-1, 3z\left(p\right)-1, 4z\left(p\right)-1\right\}$.
\end{theorem}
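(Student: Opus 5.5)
The plan is to compute the norm $N(QP_m)=P_m^2+P_{m+1}^2+P_{m+2}^2+P_{m+3}^2$ in $\mathbb{Z}_p$ and use the fact that in $Q_{\mathbb{Z}_p}=Q_{\mathbb{Z}_p}(-1,-1)$ an element is a zero divisor if and only if its norm is $0$. Write $m=2k$. Proposition \ref{1} gives all four entries in terms of Fibonacci numbers:
\[
P_{2k}\equiv(-1)^k(F_{k+3}-1),\quad P_{2k+1}\equiv(-1)^{k-1}(F_{k+2}-1),
\]
\[
P_{2k+2}\equiv(-1)^{k+1}(F_{k+4}-1),\quad P_{2k+3}\equiv(-1)^{k}(F_{k+3}-1) \pmod p .
\]
The signs disappear on squaring, so
\[
N(QP_{2k})\equiv 2(F_{k+3}-1)^2+(F_{k+2}-1)^2+(F_{k+4}-1)^2 \pmod p .
\]

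Next I use the hypothesis $k\equiv -3 \pmod{z(p)}$. By Lemma \ref{2}(v) it gives $p\mid F_{k+3}$. Hence $F_{k+4}=F_{k+3}+F_{k+2}\equiv F_{k+2}$. Setting $u:=F_{k+2}\equiv F_{k+4}$, the norm collapses to
\[
N(QP_{2k})\equiv 2+2(u-1)^2 \pmod p .
\]
Since $p$ is odd, $QP_m$ is a zero divisor if and only if $(u-1)^2\equiv -1 \pmod p$. This forces $\left(\frac{-1}{p}\right)=1$, that is $p\equiv 1 \pmod 4$. When this holds, the condition is $u\equiv 1\pm \iota$, where $\iota^2\equiv-1$.

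The remaining step is to translate the condition on $u$ into a condition on the index $k$ modulo $\pi(p)$. Put $n=k+3$, so $z(p)\mid n$ and $u\equiv F_{n+1}$. Because $F_n\equiv 0$, the pair $(F_n,F_{n+1})\equiv(0,u)$ forces $F_{n+r}\equiv uF_r$ for all $r$. Write $n\equiv jz(p) \pmod{\pi(p)}$ with $j\in\{1,\dots,\pi(p)/z(p)\}$. Then $u\equiv F_{jz(p)+1}\equiv \alpha^j$, where $\alpha=F_{z(p)+1}$, and by Lemma \ref{2}(ii)--(iv) the order of $\alpha$ equals $\pi(p)/z(p)\in\{1,2,4\}$. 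The condition $n\equiv jz(p)$ reads $k\equiv jz(p)-3=s-2 \pmod{\pi(p)}$ with $s=jz(p)-1$, which is exactly the form in the statement. Since $j$ ranges over at most four values, $s$ lies in $\{z(p)-1,\dots,4z(p)-1\}$.

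The main obstacle is this last step: matching the algebraic condition $F_{jz(p)+1}\equiv 1\pm\iota$ with the list of admissible $s$, which requires a case split according to $z(p)$ modulo $4$ via Lemma \ref{2}. I would first check directly which powers $\alpha^j$ can equal $1\pm\iota$; note that $(1\pm\iota)^2=\pm2\iota$, which constrains whether such $j$ exist in each case. I would then verify that the statement is consistent with this analysis, treating the listed $s$ as an enumeration of the residues of $k$ modulo $\pi(p)$ compatible with $k\equiv-3 \pmod{z(p)}$. The essential content of the equivalence is the reduction to $(F_{k+2}-1)^2\equiv-1$, which yields $p\equiv1\pmod4$.
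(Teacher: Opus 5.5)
There is a genuine gap in the converse direction, and it cannot be closed because that direction of the statement is false.

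Your reduction is correct and matches the paper's. With $m=2k$ and $p\mid F_{k+3}$ you get $N(QP_{2k})\equiv 2+2(F_{k+2}-1)^2$. So $QP_{2k}$ is a zero divisor iff $(F_{k+2}-1)^2\equiv -1\pmod p$, which proves the ``only if'' half.

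The converse is the step you postpone as the ``main obstacle'' and propose to handle by treating the listed $s$ as an enumeration. As your own computation shows, the hypothesis $k\equiv -3\pmod{z(p)}$ alone already gives $k\equiv jz(p)-3=s-2\pmod{\pi(p)}$ with $s=jz(p)-1$ in the list. So the $s$-condition is vacuous. The ``if'' direction would then claim that $p\equiv 1\pmod 4$ by itself forces $F_{k+2}-1$ to be a square root of $-1$. Solvability of $x^2\equiv-1$ does not give that.

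The check you outlined actually refutes it. Since $\pi(p)/z(p)\mid 4$, $u=F_{k+2}\equiv\alpha^j$ satisfies $u^4\equiv 1$. On the other hand, $u=1\pm\iota$ gives $u^4=(\pm2\iota)^2=-4$. Hence a zero divisor forces $p\mid 5$, and for $p\neq 5$ no $QP_m$ satisfying the hypothesis is a zero divisor.

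A concrete counterexample: take $p=13$, $m=8$. Then $k=4\equiv-3\pmod 7$, $s=6=z(13)-1$, and $13\equiv1\pmod 4$. Yet $(P_8,P_9,P_{10},P_{11})\equiv(12,6,6,12)$ and $N(QP_8)\equiv 360\equiv 9\not\equiv 0\pmod{13}$.

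Even for $p=5$, where $\alpha=F_6\equiv 3$, only $j=1,2$ work, i.e.\ $m/2\equiv 2,7\pmod{20}$. For instance $m=24$ (with $s=14=3z(5)-1$) gives $(P_{24},\dots,P_{27})\equiv(4,4,4,4)$ and $N(QP_{24})\equiv 4\pmod 5$.

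The paper's proof has exactly the same defect. It asserts $N(QP_m)\equiv0\Leftrightarrow\left(\frac{-1}{p}\right)=1$, then extracts the list of $s$ from $k\equiv-3\pmod{z(p)}$ alone. So your plan to ``verify that the statement is consistent with this analysis'' cannot succeed. What your argument legitimately proves is the ``only if'' direction. Completed as above, it gives the corrected statement: under the hypothesis, $QP_m$ is a zero divisor in $Q_{\mathbb{Z}_p}$ iff $p=5$ and $m/2\equiv 2,7\pmod{20}$.
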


\begin{proof}
If $m$ is even, then $m=2k, k\in \mathbb{N}.$ A bi-periodic Padovan quaternion $QP_{m}$ is a zero divisor in the quaternion algebra $Q_{\mathbb{Z}_p}$ if and ony if the norm $N\left( QP_{m}\right) \equiv
0\left(\bmod \ p\right).$ Thus we have
\begin{eqnarray*}
N\left( QP_{m}\right)  &\equiv &0\left(\bmod \ p\right)  \\
&\Leftrightarrow &P_{m}^{2}+P_{m+1}^{2}+P_{m+2}^{2}+P_{m+3}^{2}\equiv
0\left(\bmod \ p\right)  \\
&\Leftrightarrow &P_{2k}^{2}+P_{2k+1}^{2}+P_{2k+2}^{2}+P_{2k+3}^{2}\equiv
0\left(\bmod \ p\right).   
\end{eqnarray*}
Using Proposition \ref{1}, we express the terms of the bi-periodic
Padovan sequence in terms of Fibonacci numbers. Therefore, we obtain
\begin{eqnarray*}
N\left( QP_{m}\right)  &\equiv &0\left(\bmod \ p\right)  \\
&\Leftrightarrow &\left. \left( \left( -1\right) ^{k}\left( F_{k+3}-1\right)
\right) ^{2}+\left( \left( -1\right) ^{k-1}\left( F_{k+2}-1\right) \right)
^{2}\right.  \\
&&\left. +\left( \left( -1\right) ^{k+1}\left( F_{k+4}-1\right) \right)
^{2}+\left( \left( -1\right) ^{k}\left( F_{k+3}-1\right) \right) ^{2}\equiv
0\left(\bmod \ p\right) \right.  \\
&\Leftrightarrow &2\left( F_{k+3}-1\right) ^{2}+\left( F_{k+2}-1\right)
^{2}+\left( F_{k+4}-1\right) ^{2}\equiv 0\left(\bmod \ p\right) .
\end{eqnarray*}
Since $\frac{m}{2}\equiv -3\left(\bmod \ z\left( p\right)\right) $ and from Lemma \ref{2}(v), it follows
\begin{equation}
z\left( p\right) \mid k+3\Leftrightarrow p\mid F_{k+3}\Leftrightarrow
F_{k+2}\equiv F_{k+4}\left(\bmod \ p\right)   \label{*}
\end{equation}
From here, we obtain that $\left( F_{k+2}-1\right) ^{2}\equiv \left(
F_{k+4}-1\right) ^{2}\left(\bmod \ p\right)$. Moreover, since $p\mid F_{k+3},$ we have $F_{k+3}^{2}\equiv 0\left(\bmod \ p\right) $. Therefore, we get
$$
N\left(QP_{m}\right)\equiv 0\left(\bmod \ p\right)
\Leftrightarrow
\left( F_{k+3}-1\right) ^{2}+\left( F_{k+2}-1\right) ^{2}\equiv 0\left( 
 \bmod \ p\right).$$ Since $p\mid F_{k+3}$, we have
$\left(F_{k+2}-1\right)^2\equiv-1\pmod p.$
It follows that $\left(\frac{-1}{p}\right)=1.$
Using the property of the Legendre symbol,
$\left(\frac{-1}{p}\right)=(-1)^{\frac{p-1}{2}},$
we conclude that 
$p\equiv 1\left(\bmod \ 4 \right).$
Therefore, we obtain
\begin{equation*}
N\left(QP_m\right)\equiv0\pmod p
\Leftrightarrow
\left(\frac{-1}{p}\right)=1
\Leftrightarrow
p\equiv1\pmod4.
\end{equation*}
\\
Moreover, a solution of the congruence $x^{2}\equiv -1 \left(\bmod \ p\right)$ is 
$F_{k+2}-1.$ Let $\alpha$ be an integer solution of this congruence such that $F_{k+2}\equiv \alpha +1 \left(\bmod \ p\right)$. Then there exist  $s\in\{0,1,\dots,\pi\left(p\right)-1\}$ such that 
$k\equiv s-2 \pmod{\pi\left( p\right)}$.
From the hypothesis, we have $\frac{m}{2} \equiv -3 \pmod{z\left( p\right)}\Leftrightarrow k \equiv -3 \pmod{z\left( p\right)}.$ 
By Lemma \ref{2}, we have
$z\left(p\right) \mid \pi\left(p\right),$ in fact $\pi\left(p\right)\in \left\{z\left(p\right),2z\left(p\right),4z\left(p\right)\right\}$. Hence, $$s-2\equiv -3 \pmod{z\left( p\right)} \Leftrightarrow
s\equiv z\left( p\right)-1 \pmod{z\left( p\right)}.$$
It follows that $s\in \left\{z\left(p\right)-1, 2z\left(p\right)-1, 3z\left(p\right)-1, 4z\left(p\right)-1\right\}$. \\\
\end{proof}

We now state the corresponding result for odd \(m\).

\begin{theorem}\label{2}
Let $m$ be an odd integer and $\frac{m-1}{2} \equiv -3 \pmod{z\left( p\right)}$. Then, the bi-periodic Padovan quaternion $QP_{m}$ with twin prime coefficients $a = p-2$ and $b = p$ is a zero divisor in the quaternion algebra $Q_{\mathbb{Z}_p}$ 
if and only if $p \equiv 1 \pmod{3}$ and 
$\frac{m-1}{2} \equiv s-2 \pmod{\pi(p)}$, where $s \in \left\{\,z(p)-1,\;2z(p)-1,\;3z(p)-1,\;4z(p)-1\,\right\}.$
\end{theorem}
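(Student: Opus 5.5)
The plan is to follow the proof of Theorem \ref{1}, now with $m=2k+1$ odd. Since $Q_{\mathbb{Z}_p}=Q_{\mathbb{Z}_p}(-1,-1)$, the quaternion $QP_m$ is a zero divisor if and only if
\[
N(QP_m)\equiv P_{2k+1}^2+P_{2k+2}^2+P_{2k+3}^2+P_{2k+4}^2\equiv 0 \pmod p .
\]
By Proposition \ref{1} the four terms are, up to sign,
\[
P_{2k+1}\equiv \pm\left(F_{k+2}-1\right),\quad P_{2k+2}\equiv \pm\left(F_{k+4}-1\right),\quad P_{2k+3}\equiv \pm\left(F_{k+3}-1\right),\quad P_{2k+4}\equiv \pm\left(F_{k+5}-1\right).
\]
The signs disappear on squaring, so the norm condition becomes
\[
\left(F_{k+2}-1\right)^2+\left(F_{k+3}-1\right)^2+\left(F_{k+4}-1\right)^2+\left(F_{k+5}-1\right)^2\equiv 0 \pmod p .
\]

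Next I use the hypothesis $\frac{m-1}{2}=k\equiv -3 \pmod{z(p)}$. By Lemma \ref{2}(v) this gives $p\mid F_{k+3}$. The Fibonacci recurrence then yields
\[
F_{k+4}\equiv F_{k+2},\qquad F_{k+5}=F_{k+4}+F_{k+3}\equiv F_{k+2} \pmod p .
\]
Put $x=F_{k+2}-1$. The norm condition collapses to $3x^2+1\equiv 0 \pmod p$, that is,
\[
(3x)^2\equiv -3 \pmod p .
\]
Here $p\ge 5$ guarantees that $3$ is invertible modulo $p$.

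The existence of such an $x$ forces $\left(\frac{-3}{p}\right)=1$. I then invoke quadratic reciprocity in the standard form $\left(\frac{-3}{p}\right)=\left(\frac{p}{3}\right)$, obtained from $\left(\frac{-1}{p}\right)\left(\frac{3}{p}\right)$. This shows that $\left(\frac{-3}{p}\right)=1$ exactly when $p\equiv 1 \pmod 3$, which is the Legendre-symbol condition in the statement.

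For the "moreover" part, I argue as in Theorem \ref{1}. Let $\alpha$ be a root of $3\alpha^2+1\equiv 0 \pmod p$, and consider the condition $F_{k+2}\equiv \alpha+1 \pmod p$. Since the Fibonacci sequence modulo $p$ is periodic with period $\pi(p)$, this condition pins $k$ down modulo $\pi(p)$; write $k\equiv s-2 \pmod{\pi(p)}$ with $0\le s<\pi(p)$.

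Combining this with $k\equiv -3 \pmod{z(p)}$ gives $s\equiv -1 \pmod{z(p)}$. Lemma \ref{2}(i)--(iv) gives $\pi(p)\in\{z(p),2z(p),4z(p)\}$, so $s$ must lie in $\{z(p)-1,\,2z(p)-1,\,3z(p)-1,\,4z(p)-1\}$.

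The main obstacle is the bookkeeping in the first step. Four terms are involved instead of the three distinct values in the even case, and I have to check that $F_{k+5}$ also reduces to $F_{k+2}$, so that the coefficient really is $3$; this is what produces $-3$ rather than $-1$. I also need to make sure the passage from $F_{k+2}\equiv\alpha+1$ to a condition on $k$ modulo $\pi(p)$ is stated in the same (somewhat loose) sense as in Theorem \ref{1}.
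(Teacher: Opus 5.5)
\textbf{There is a genuine gap in the converse direction; your proposal follows the paper's proof and shares it, and in fact the ``if'' direction of the statement is false.}

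Your reduction is correct up to the congruence $3(F_{k+2}-1)^2+1\equiv 0 \pmod p$, obtained from $p\mid F_{k+3}$ and $F_{k+5}\equiv F_{k+4}\equiv F_{k+2}$. The deduction that a zero divisor forces $\left(\frac{-3}{p}\right)=1$, i.e.\ $p\equiv 1 \pmod 3$, is also correct. The converse is where it breaks. Solvability of $X^2\equiv -3 \pmod p$ does not make the particular residue $3(F_{k+2}-1)$ a root. The $s$-condition cannot supply this. Since $z(p)\mid \pi(p)\le 4z(p)$, every $k\equiv -3 \pmod{z(p)}$ satisfies $k\equiv s-2 \pmod{\pi(p)}$ for some $s\in\{z(p)-1,2z(p)-1,3z(p)-1,4z(p)-1\}$, so that condition merely restates the hypothesis. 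Your step ``$F_{k+2}\equiv\alpha+1$ pins $k$ down modulo $\pi(p)$'' never checks that $\alpha+1$ is actually a value of $F_{k+2}$ at an index with $z(p)\mid k+3$. In general it is not. Note also that for twin primes $p\ge 7$ one automatically has $p\equiv 1\pmod 3$, so the statement would make every such $QP_m$ a zero divisor.

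To see why, apply Cassini's identity $F_{k+2}F_{k+4}-F_{k+3}^2=(-1)^{k+1}$ with $F_{k+3}\equiv 0$ and $F_{k+4}\equiv F_{k+2}$. This gives $F_{k+2}^2\equiv(-1)^{k+1} \pmod p$, which leaves three cases for $N(QP_m)\equiv 3(F_{k+2}-1)^2+1$:
\begin{itemize}
\item If $F_{k+2}\equiv 1$, the norm is $\equiv 1$.
\item If $F_{k+2}\equiv -1$, it is $\equiv 13$.
\item If $F_{k+2}^2\equiv -1$, then $(F_{k+2}-1)^2\equiv -2F_{k+2}$ and the norm is $\equiv 1-6F_{k+2}$. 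This can vanish only if $36\equiv -1$, i.e.\ $p=37$, which is excluded since $35$ is not prime.
\end{itemize}
So among admissible primes the norm vanishes only for $p=13$, and there exactly when $\frac{m-1}{2}\equiv 11 \pmod{28}$.

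A concrete counterexample to the ``if'' direction is $p=7$ (so $a=5$, $b=7$, $z(7)=8$, $\pi(7)=16$) with $m=11$. Here $k=5\equiv -3 \pmod 8$, $s=7=z(7)-1$, and $7\equiv 1 \pmod 3$. Yet $P_{11},P_{12},P_{13},P_{14}\equiv -2,-2,1,2 \pmod 7$, so $N(QP_{11})\equiv 13\equiv 6\not\equiv 0 \pmod 7$.

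So no argument of this shape can prove the statement as written. What you have actually shown is only the necessity of $p\equiv 1 \pmod 3$. The same defect affects the even-index theorem you are modelling on.
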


\begin{proof}

If $m$ is odd, then $m=2k+1,\  k\in
\mathbb{N}$. Since $\frac{m-1}{2}\equiv -3\left(\bmod \ z\left( p\right)\right)$, we have $k\equiv -3\left(\bmod \ z\left( p\right)\right)$. From Lemma \ref{2} (v), it follows
\begin{equation}
z\left( p\right) \mid k+3\Leftrightarrow p\mid F_{k+3}\Leftrightarrow
F_{k+3}\equiv 0\left(\bmod \ p\right)   \label{**}
\end{equation}
A bi-periodic Padovan quaternion $QP_{m}$ is a zero divisor in the quaternion algebra $Q_{\mathbb{Z}_p}$ if and only if the norm 
$N\left( QP_{m}\right) \equiv
0\left(\bmod \ p\right)$. Thus we have
\begin{eqnarray*}
N\left( QP_{m}\right)  &\equiv &0\left(\bmod \ p \right)  \\
&\Leftrightarrow &P_{m}^{2}+P_{m+1}^{2}+P_{m+2}^{2}+P_{m+3}^{2}\equiv
0\left(\bmod \ p\right)  \\
&\Leftrightarrow &P_{2k+1}^{2}+P_{2k+2}^{2}+P_{2k+3}^{2}+P_{2k+4}^{2}\equiv
0\left(\bmod \ p\right)  \\
&\Leftrightarrow &P_{2k-2}^{2}+P_{2k+2}^{2}+P_{2k}^{2}+P_{2k+4}^{2}\equiv
0\left(\bmod \ p\right).
\end{eqnarray*}
Using Proposition \ref{1}, we obtain
\begin{eqnarray*}
N\left( QP_{m}\right)  &\equiv &0\left(\bmod \ p\right)  \\
&\Leftrightarrow &
\left( \left( -1\right) ^{k-1}\left( F_{k+2}-1\right) \right) ^{2}
+\left( \left( -1\right) ^{k}\left( F_{k+3}-1\right) \right) ^{2} \\
&&
+\left( \left( -1\right) ^{k+1}\left( F_{k+4}-1\right) \right) ^{2}
+\left( \left( -1\right) ^{k+2}\left( F_{k+5}-1\right) \right) ^{2}
\equiv 0\left(\bmod \ p\right)  \\
&\Leftrightarrow &
\left( F_{k+2}-1\right) ^{2}
+\left( F_{k+3}-1\right) ^{2}
+\left( F_{k+3}+F_{k+2}-1\right) ^{2} \\
&&
+\left( 2F_{k+3}+F_{k+2}-1\right) ^{2}
\equiv 0\left(\bmod \ p\right) .
\end{eqnarray*}
From (\ref{**}), we have
\begin{align*}
N\left( QP_{m} \right) &\equiv 0 \pmod{p} 
\;\Leftrightarrow\; 3 \left( F_{k+2}-1 \right)^2 \equiv -1 \pmod{p} \\[1mm]
&\Leftrightarrow \left( 3 \left( F_{k+2}-1 \right) \right)^2 \equiv -3 \pmod{p} \Leftrightarrow  \Big( \frac{-3}{p} \Big) = 1 \\[1mm]
&\Leftrightarrow \Big( \frac{-1}{p} \Big) \Big( \frac{3}{p} \Big) = 1 \Leftrightarrow\; \Big( \frac{3}{p} \Big) = (-1)^{\frac{p-1}{2}}.
\end{align*}
Since $p$ is a prime with $p\geq 5,$ by the quadratic
reciprocity law,
$\left(\frac{q}{p}\right)\left(\frac{p}{q}\right)
=
(-1)^{\frac{p-1}{2}\frac{q-1}{2}},$
with $q=3$, we obtain

$$\Bigl( \frac{3}{p} \Bigr) \Bigl( \frac{p}{3} \Bigr) =(-1)^{\frac{p-1}{2}}\Leftrightarrow \Bigl( \frac{3}{p} \Bigr) 
= (-1)^{\frac{p-1}{2}} \Bigl( \frac{p}{3} \Bigr).$$
Therefore, we obtain
\begin{equation*}
N\left( QP_{m}\right) \equiv 0\left(\bmod \ p\right) \Leftrightarrow \left( \frac{p}{3}\right) =1\Leftrightarrow p\equiv 1\left(\bmod \ 3\right).
\end{equation*}

Moreover, a solution of the congruence $x^{2}\equiv -3 \left(\bmod \ p\right)$
is $3\left(F_{k+2}-1\right).$ Let $\alpha$ be an integer solution of the congruence $x^{2}\equiv -3 \left(\bmod \ p\right)$ such that $3\left(F_{k+2}-1\right)\equiv \alpha \left(\bmod \ p\right).$
Since $p \neq 3$, the integer $3$ is invertible modulo $p$; let $\beta$ denote its inverse modulo $p$. It then follows that $F_{k+2}\equiv \beta \alpha +1\pmod{p}$. Consequently, there exist $s\in\{0,1,\dots,\pi\left(p\right)-1\}$ such that $k\equiv s-2 \pmod{\pi\left( p\right)}$.  
From the hypothesis, we have $\frac{m-1}{2} \equiv -3 \pmod{z\left( p\right)}\Leftrightarrow k \equiv -3 \pmod{z\left( p\right)}.$ 
By Lemma \ref{2}, we have
$z\left(p\right)\mid \pi\left(p\right)$, and in fact $\pi\left(p\right)\in \left\{z\left(p\right),2z\left(p\right),4z\left(p\right)\right\}$. Hence, $$s-2\equiv -3 \pmod{z\left( p\right)} \Leftrightarrow
s\equiv z\left( p\right)-1 \pmod{z\left( p\right)}.$$
It follows that $s\in \left\{z\left(p\right)-1, 2z\left(p\right)-1, 3z\left(p\right)-1, 4z\left(p\right)-1\right\}$. 
\end{proof}

%%%%%%%%%%%%%%%%%%%%%%%%%%%%%%%%%%%%%%%
\section{Bi-periodic Perrin quaternions over finite fields} \label{sec: 2}

In this section, we introduce the bi-periodic Perrin sequence and present its relationship with the bi-periodic Padovan sequence. Then, we define the bi-periodic Perrin quaternions and derive the corresponding quaternion identities. Finally, by fixing a specific twin prime parameterization, we study their norm structure and obtain criteria for the existence of zero divisors in \(Q_{\mathbb{Z}_p}\).\\

Let $\{P_n(a,b)\}_{n\geq0}$ denote the bi-periodic Padovan sequence $\{P_n\}_{n\geq0}$ defined in \cite{diskaya}. Using the same recurrence structure, we now introduce the bi-periodic Perrin sequence.

\begin{definition}
The bi-periodic Perrin sequence $\{R_n(a,b)\}_{n\ge 0}$, or simply $\{R_n\}_{n\ge 0}$, is defined by the recurrence relation
\[
R_n(a,b)=
\begin{cases}
a\,R_{n-2}(a,b)+R_{n-3}(a,b), & \text{if $n$ is even},\\[1mm]
b\,R_{n-2}(a,b)+R_{n-3}(a,b), & \text{if $n$ is odd},
\end{cases}
\qquad n\ge 3,
\] with initial values
$R_0(a,b)=3,\ R_1(a,b)=0,\ \text{and} \ R_2(a,b)=2.$
\end{definition}

The first few terms of these sequences are given in Table~\ref{tab:firstterms}.

\begin{table}[h!]
\small \centering
\[
\begin{array}{c l @{\qquad} l}
\hline
n & P_n & R_n \\
\hline
0 & 1 & 3 \\
1 & 0 & 0 \\
2 & a & 2 \\
3 & 1 & 3 \\
4 & a^{2} & 2a \\
5 & a+b & 2+3b \\
6 & 1+a^{3} & 3+2a^{2} \\
7 & a^{2}+ab+b^{2} & 2a+2b+3b^{2} \\
8 & a^{4}+2a+b & 2a^{3}+3a+3b+2 \\
9 & 1+a^{3}+a^{2}b+ab^{2}+b^{3}
  & 3+2a^{2}+2ab+2b^{2}+3b^{3} \\
10 & a^{5}+3a^{2}+2ab+b^{2}
   & 2a^{4}+3a^{2}+4a+3ab+2b+3b^{2} \\
\hline
\end{array}
\]
\caption{The first terms of the bi-periodic Padovan and the bi-periodic Perrin sequences.}
\label{tab:firstterms}
\end{table}
Unlike the classical case, the relation between the bi-periodic Padovan and Perrin sequences depends on the parity of the index. 

\begin{proposition} \label{padovan-perrin}
For $n\ge 3$, the bi-periodic Padovan sequence and bi-periodic Perrin sequence satisfy the following relation:
%\[R_n(a,b)=3\,P_{n-3}\!\left(a^{\xi(n+1)}b^{\xi(n)},b^{\xi(n+1)}a^{\xi(n)}\right)+2\,P_{n-2}\!\left(a^{\xi(n+1)}b^{\xi(n)},b^{\xi(n+1)}a^{\xi(n)}\right).\]
\[
R_n(a,b)=
\begin{cases}
3\,P_{n-3}(a,b)+2P_{n-2}(a,b), & \text{if $n$ is even},\\[1mm]
3\,P_{n-3}(b,a)+2P_{n-2}(b,a), & \text{if $n$ is odd},
\end{cases}\]
\end{proposition}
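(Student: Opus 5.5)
The plan is to avoid a direct induction on the two-term bi-periodic recurrence and instead use the sixth-order recurrence, which is symmetric in $a$ and $b$. A direct induction with the defining recurrence is awkward. For even $n$, the term $P_{n-2}(a,b)$ has even index and evolves with coefficient $a$. The term $P_{n-3}(a,b)$ has odd index and evolves with coefficient $b$. So the combination $3P_{n-3}+2P_{n-2}$ does not obviously satisfy $x_n=a\,x_{n-2}+x_{n-3}$. The key observation is that every sequence obeying a two-periodic ternary recurrence of this shape, whatever its initial values, also satisfies
\[
x_n=(a+b)x_{n-2}-ab\,x_{n-4}+x_{n-6},\qquad n\ge 6,
\]
and this recurrence is unchanged when $a$ and $b$ are swapped. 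This is the result of Alp et al.\ recalled in the introduction, specialized to $a_1=b_1=0$, $a_2=a$, $b_2=b$, $a_3=b_3=1$. Its derivation only uses the recurrence and not the initial values. If in doubt, one re-derives it directly by eliminating the odd-index terms through two applications of the defining relation.

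\emph{Step 1.} Apply this fact to three sequences: $R_n(a,b)$, $P_n(a,b)$ and $P_n(b,a)$. All three satisfy the same sixth-order relation above, since for $P_n(b,a)$ the coefficients are $(b+a)$ and $ba$. Consequently the right-hand side $S_n$ also satisfies it, whenever its indices allow. Here $S_n=3P_{n-3}(a,b)+2P_{n-2}(a,b)$ for even $n$, and $S_n=3P_{n-3}(b,a)+2P_{n-2}(b,a)$ for odd $n$. The recurrence applied to $P_{n-3}$ needs $n-3\ge 6$, so this holds for $n\ge 9$. The relation links $x_n$ only to indices of the same parity. Hence the even and odd subsequences can be treated separately: each is determined by a third-order linear recurrence in steps of $2$, that is, by three consecutive same-parity values.

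\emph{Step 2.} Check the base cases directly from Table~\ref{tab:firstterms}, together with $P_n(b,a)$ obtained by swapping $a$ and $b$.
\begin{itemize}
\item Even $n=4,6,8$: the values $3P_1+2P_2=2a$, $3P_3+2P_4=3+2a^2$ and $3P_5+2P_6=2a^3+3a+3b+2$ match $R_4$, $R_6$ and $R_8$.
\item Odd $n=3,5,7$: the values $3P_0(b,a)+2P_1(b,a)=3$, $3P_2(b,a)+2P_3(b,a)=3b+2$ and $3P_4(b,a)+2P_5(b,a)=3b^2+2a+2b$ match $R_3$, $R_5$ and $R_7$.
\end{itemize}

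\emph{Step 3.} Run strong induction within each parity class. For even $n\ge 10$ and odd $n\ge 9$, both $R_n$ and $S_n$ equal $(a+b)x_{n-2}-ab\,x_{n-4}+x_{n-6}$ applied to earlier same-parity terms. These earlier terms agree by the induction hypothesis, so $R_n=S_n$. This completes the proof for all $n\ge 3$.

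The main obstacle is justifying the sixth-order recurrence for $R_n$ and for the swapped sequence $P_n(b,a)$ with the correct index range. The step I would carry out explicitly is the following. For even $n$, write $R_n=aR_{n-2}+R_{n-3}$, substitute $R_{n-3}=bR_{n-5}+R_{n-6}$, and use $R_{n-5}=R_{n-2}-aR_{n-4}$, which comes from the even recurrence at index $n-2$. This gives $R_n=(a+b)R_{n-2}-abR_{n-4}+R_{n-6}$. The odd case is identical with the roles of $a$ and $b$ exchanged. The result is symmetric, so it applies verbatim to $P(b,a)$. One must only make sure that the base range, $n=3,\dots,8$, covers every index where the recurrence for $S_n$ is not yet available.
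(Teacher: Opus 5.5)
Your proof is correct, but it takes a genuinely different route from the paper. The paper inducts directly on the two-term recurrence: assuming the formula up to $n=2m$, it expands $R_{2m+1}=bR_{2m-1}+R_{2m-2}$ and $R_{2m+2}=aR_{2m}+R_{2m-1}$ and regroups. The coefficient mismatch you point out is handled there by the identity $P_{2k+1}(a,b)=P_{2k+1}(b,a)$, which the paper asserts without proof. This identity lets each odd-indexed summand be moved to whichever of $P(a,b)$, $P(b,a)$ has the right coefficient at that index, so both groups become single steps of a bi-periodic Padovan recurrence.

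Your sixth-order recurrence is symmetric in $a,b$ and links only indices of the same parity. Hence the parity-dependent switch of parameters in $S_n$ never interacts with the recursion, and no symmetry lemma is needed. The price is six base cases instead of two, because the recurrence for $S_n$ only becomes available at $n\ge 9$. In return, your argument is self-contained, and your base range matches exactly where the recurrence is valid. The paper's base cases $n=3,4$ do not quite suffice, since its step for $n=5$ invokes the formula for $R_2$, which involves the undefined $P_{-1}$; this is a harmless omission, since $n=5$ is easily verified directly.

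Your tool also supplies the paper's missing lemma: the odd subsequence of $P_n(a,b)$ obeys the symmetric recurrence with symmetric initial values $P_1=0$, $P_3=1$, $P_5=a+b$. The paper's approach, for its part, is shorter and shows step by step why the parameters must be swapped for odd $n$.
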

\begin{proof}
We prove the statement by induction on $n$. The cases $n=3$ and $n=4$ are verified directly.
Now assume that the statement holds for all indices up to $n=2m$, i.e.,
\[
R_{2m}(a,b)=3P_{2m-3}(a,b)+2P_{2m-2}(a,b)
\quad \text{and} \quad
R_{2m-1}(a,b)=3P_{2m-4}(b,a)+2P_{2m-3}(b,a).
\]

We prove the statement for $n=2m+1$ and $n=2m+2$.

Let $n=2m+1$. Using the recurrence relation of the bi-periodic Perrin sequence, we have
\[
R_{2m+1}(a,b)=bR_{2m-1}(a,b)+R_{2m-2}(a,b).
\]
Applying the induction hypothesis, we get
\[
R_{2m+1}(a,b)
= b\bigl(3P_{2m-4}(b,a)+2P_{2m-3}(b,a)\bigr)
+3P_{2m-5}(a,b)+2P_{2m-4}(a,b).
\]
Since $P_{2k+1}(a,b)=P_{2k+1}(b,a)$ for all $k\ge0$, and by using the definition of the bi-periodic Padovan sequence, we obtain
\[
\begin{aligned}
R_{2m+1}(a,b)
&=3\bigl(bP_{2m-4}(b,a)+P_{2m-3}(b,a)\bigr)
 +2\bigl(bP_{2m-3}(a,b)+P_{2m-4}(a,b)\bigr)\\
 &=3P_{2m-2}(b,a)+2P_{2m-1}(b,a).
\end{aligned}
\]
 
Let $n=2m+2$. Using the recurrence relation of the bi-periodic Perrin sequence, we have
\[
R_{2m+2}(a,b)=aR_{2m}(a,b)+R_{2m-1}(a,b).
\]
By the induction hypothesis, we get
\[
\begin{aligned}
R_{2m+2}(a,b)
&=a\bigl(3P_{2m-3}(a,b)+2P_{2m-2}(a,b)\bigr)
 +3P_{2m-4}(b,a)+2P_{2m-3}(b,a).
\end{aligned}
\]
Since $P_{2k+1}(a,b)=P_{2k+1}(b,a)$ for all $k\ge0$, and by using the definition of the bi-periodic Padovan sequence, we obtain
\[
\begin{aligned}
R_{2m+2}(a,b)
&=3\bigl(aP_{2m-3}(b,a)+P_{2m-4}(b,a)\bigr)
 +2\bigl(aP_{2m-2}(a,b)+P_{2m-3}(a,b)\bigr)\\
 &=3P_{2m-1}(a,b)+2P_{2m}(a,b).
\end{aligned}
\]

This completes the proof.
\end{proof}

To derive a unified relation for the quaternion case, covering both the odd and even cases, using the bi-periodic Padovan and Perrin sequences, we now define the bi-periodic Perrin quaternions as follows.
\begin{definition}
The bi-periodic Perrin quaternion sequence $\{QR_n(a,b)\}_{n\ge 0}$, or simply $\{QR_n\}_{n\ge 0}$, is defined as
\begin{equation*}
QR_{n}(a,b)=\begin{cases}
R_{n}(a,b)+R_{n+1}(b,a)i+R_{n+2}(a,b)j+R_{n+3}(b,a)k, & \text{if $n$ is even},\\[1mm]
R_{n}(b,a)+R_{n+1}(a,b)i+R_{n+2}(b,a)j+R_{n+3}(a,b)k, & \text{if $n$ is odd},
\end{cases}
\end{equation*}
where $R_{n}(a,b)$ is the $n$-th bi-periodic Perrin number.
\end{definition}
The first few terms of the bi-periodic Perrin quaternion sequence are given by
\[
\begin{aligned}
QR_0 &= 3+2j+3k,\\
QR_1 &= 2i+3j+2a\,k,\\
QR_2 &= 2+3i+2a\,j+(3a+2)k,\\
QR_3 &= 3+2a\,i+(3a+2)j+(2a^2+3)k,\\
QR_4 &= 2a+(3a+2)i+(2a^2+3)j+(3a^2+2a+2b)k,\\
QR_5&=(3a+2)+(2a^2+3)i+(3a^2+2a+2b)j+(2a^3+3a+3b+2)k.
\end{aligned}
\]

It is clear to see that bi-periodic Perrin quaternions also satisfy the same recurrence as bi-periodic Padovan quaternions given in (\ref{rec0}). By considering this recurrence relation, the generating function of the bi-periodic Perrin quaternion sequence is given by
\[
\sum_{n=0}^{\infty} QR_n(a,b)\,x^n
=
\frac{
A'(x)+B'(x)i+C'(x)j+D'(x)k
}{
1-(a+b)x^2+abx^4-x^6
}
\]
where
\[
\begin{aligned}
A'(x)&=3+(2-3a-3b)x^2+3x^3+b(3a-2)x^4+(2-3b)x^5,\\
B'(x)&=2x+3x^2-2bx^3+(2-3b)x^4+3x^5,\\
C'(x)&=2+3x-2bx^2+(2-3b)x^3+3x^4,\\
D'(x)&=3+2ax+(2-3b)x^2+(3-2ab)x^3+2x^5.
\end{aligned}
\]

Analogous to the relationship between the bi-periodic Padovan and bi-periodic Perrin sequences, the corresponding connection also holds for their quaternion extensions. This follows directly from the definition of bi-periodic Perrin quaternions and Proposition \ref{padovan-perrin}; hence, we omit the proof.

\begin{proposition} For all \(n\ge 3\),
$$QR_n(a,b)=3\,QP_{n-3}(a,b)+2\,QP_{n-2}(a,b).$$
\end{proposition}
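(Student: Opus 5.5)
The plan is to verify the identity componentwise, using only Proposition \ref{padovan-perrin} and the definitions of $QP_n$ and $QR_n$. The right-hand side $3\,QP_{n-3}(a,b)+2\,QP_{n-2}(a,b)$ has components
\[
3P_{n-3+\ell}(a,b)+2P_{n-2+\ell}(a,b),\qquad \ell=0,1,2,3,
\]
for the basis elements $1,i,j,k$ respectively. So it suffices to show that the $\ell$-th component of $QR_n(a,b)$ equals $3P_{n+\ell-3}(a,b)+2P_{n+\ell-2}(a,b)$ for each $\ell$.

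The key observation is that Proposition \ref{padovan-perrin} holds for arbitrary parameters, so it may be applied with $(a,b)$ replaced by $(b,a)$. For odd $N\ge 3$ this gives
\[
R_N(b,a)=3P_{N-3}(a,b)+2P_{N-2}(a,b),
\]
using the odd case of the proposition with the roles of $a$ and $b$ exchanged, since swapping twice returns $(a,b)$. For even $N\ge 3$, the even case gives directly
\[
R_N(a,b)=3P_{N-3}(a,b)+2P_{N-2}(a,b).
\]
Hence, for every $N\ge 3$, the Perrin number with parameters $(a,b)$ if $N$ is even, and $(b,a)$ if $N$ is odd, equals $3P_{N-3}(a,b)+2P_{N-2}(a,b)$.

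Now split on the parity of $n$.

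\emph{Case $n$ even.} The components of $QR_n(a,b)$ are $R_n(a,b)$, $R_{n+1}(b,a)$, $R_{n+2}(a,b)$ and $R_{n+3}(b,a)$. The indices $n$ and $n+2$ are even and carry the parameters $(a,b)$; the indices $n+1$ and $n+3$ are odd and carry $(b,a)$. This is exactly the pattern in the observation above, so each component equals the required Padovan combination.

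\emph{Case $n$ odd.} The components are $R_n(b,a)$, $R_{n+1}(a,b)$, $R_{n+2}(b,a)$ and $R_{n+3}(a,b)$. Again the parameter pattern matches the parity of each index, and the same conclusion follows.

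In both cases every index $N=n+\ell$ satisfies $N\ge n\ge 3$, so Proposition \ref{padovan-perrin} applies to each component. Collecting the four components gives the quaternion identity.

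The only delicate point is the bookkeeping of the parameter swap. One must check that the definition of $QR_n$ alternates $(a,b)$ and $(b,a)$ in exactly the way that makes the swapped odd case of Proposition \ref{padovan-perrin} produce $P(a,b)$ rather than $P(b,a)$. This consistency is presumably why $QR_n$ was defined with alternating parameters, so I expect no real obstacle beyond that check.
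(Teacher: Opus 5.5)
Your proof is correct and takes the same approach as the paper. The paper omits the proof, saying only that it follows directly from the definition of $QR_n$ and Proposition~\ref{padovan-perrin}. Your componentwise check is exactly the omitted bookkeeping: you apply the odd case with $(a,b)$ and $(b,a)$ interchanged, so that every component of $QR_n(a,b)$ becomes $3P_{N-3}(a,b)+2P_{N-2}(a,b)$.
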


In a similar manner, we consider bi-periodic Perrin quaternions with twin prime coefficients \( a = p-2 \) and \( b = p \) and state another main result providing a complete characterization of when a bi-periodic Perrin quaternion with twin prime coefficients becomes a zero divisor in \( Q_{\mathbb{Z}_p}\). Note that we exclude the case $p=181$ since in this case the discriminant of the quadratic congruence vanishes modulo $p$.
%%%%%%%%%%%%%%%%%%%%%%%%%%%%%%%%%%%%%%

\begin{theorem}\label{3}
Let $m$ be even and $\frac{m}{2} \equiv -3 \pmod{z\left( p\right)}$. Then, the bi-periodic Perrin quaternion $QR_{m}\left(a,b\right)$ with twin prime coefficients $a = p-2$ and $b = p$ for primes $p\neq 181$, is a zero divisor in the quaternion algebra $Q_{\mathbb{Z}_p}$ if and only if 
$$\frac{m}{2}\equiv s-1 \pmod{\pi(p)}, \
s\in\{z(p)-2,2z(p)-2,3z(p)-2,4z(p)-2\},$$
and one of the following holds:
\begin{itemize}
\item[(i)] $p\equiv 1,3 \pmod 8$ and $p$ is a quadratic residue modulo $181$;
\item[(ii)] $p\equiv 5,7 \pmod 8$ and $p$ is a quadratic non-residue modulo $181$.
\end{itemize}
\end{theorem}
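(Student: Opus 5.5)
We use the relation $QR_n(a,b)=3\,QP_{n-3}(a,b)+2\,QP_{n-2}(a,b)$ from the last proposition to reduce the norm to bi-periodic Padovan numbers, and then Proposition \ref{1} to reduce those to Fibonacci numbers modulo $p$. Write $m=2k$ with $k\ge 2$. Then $QR_{2k}$ has components $3P_{2k-3}+2P_{2k-2}$, $3P_{2k-2}+2P_{2k-1}$, $3P_{2k-1}+2P_{2k}$ and $3P_{2k}+2P_{2k+1}$. As in Theorem \ref{1}, $QR_m$ is a zero divisor in $Q_{\mathbb{Z}_p}$ if and only if the sum of the squares of these four components is $\equiv 0 \pmod p$.

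\textbf{Step 1: reduce to Fibonacci numbers.} The hypothesis gives $k\equiv -3 \pmod{z(p)}$, so by Lemma \ref{2}(v) we have $F_{k+3}\equiv 0\pmod p$. Put $x\equiv F_{k+2}\pmod p$. Running the Fibonacci recurrence backwards gives
\[
F_{k+1}\equiv -x,\qquad F_k\equiv 2x \pmod p.
\]
Proposition \ref{1} then expresses each Padovan term, each up to the sign $(-1)^{k}$ or $(-1)^{k-1}$:
\[
P_{2k}\equiv(-1)^{k+1},\quad P_{2k+1}\equiv(-1)^{k-1}(x-1),\quad P_{2k-1}\equiv(-1)^{k}(-x-1),
\]
\[
P_{2k-2}\equiv(-1)^{k-1}(x-1),\quad P_{2k-3}\equiv(-1)^{k}(2x-1) \pmod p.
\]
Substituting, the four components are, up to sign, $4x-1$, $5x-1$, $3x+5$ and $2x+1$. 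Summing their squares, the norm condition becomes
\[
54x^2+16x+28\equiv 0 \pmod p,\quad\text{that is,}\quad 27x^2+8x+14\equiv 0 \pmod p.
\]
Here we use $p\ge 5$, so that $2$ and $27$ are invertible modulo $p$.

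\textbf{Step 2: solvability of the quadratic congruence.} The discriminant is
\[
64-4\cdot 27\cdot 14=-1448=-8\cdot 181.
\]
For $p\neq 181$ it is nonzero modulo $p$, so the congruence has a root if and only if $\left(\frac{-8\cdot181}{p}\right)=1$, i.e. $\left(\frac{-2}{p}\right)\left(\frac{181}{p}\right)=1$. We use $\left(\frac{-2}{p}\right)=1$ exactly when $p\equiv 1,3\pmod 8$. Since $181\equiv 1\pmod 4$, quadratic reciprocity gives $\left(\frac{181}{p}\right)=\left(\frac{p}{181}\right)$. The product equals $1$ exactly when both factors are $+1$, which is (i), or both are $-1$, which is (ii).

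\textbf{Step 3: the index condition.} Given a root $\alpha$, the requirement $F_{k+2}\equiv\alpha\pmod p$ forces $k$ into residue classes modulo $\pi(p)$. Writing these as $k\equiv s-1\pmod{\pi(p)}$ and combining with $k\equiv -3\pmod{z(p)}$ gives $s\equiv -2\pmod{z(p)}$. By Lemma \ref{2}, $\pi(p)\in\{z(p),2z(p),4z(p)\}$, so exactly as in the proof of Theorem \ref{1} we get $s\in\{z(p)-2,2z(p)-2,3z(p)-2,4z(p)-2\}$.

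\textbf{Main obstacle.} The delicate part is the sign bookkeeping in Step 1. An error in any parity sign changes the linear coefficient of the quadratic and hence the discriminant, so I would cross-check the reduction numerically, for instance with $p=7$ using Table~\ref{tab:firstterms}. The index statement in Step 3 is the same kind of loose translation as in Theorems \ref{1} and \ref{2}, and I would phrase it in the same way.
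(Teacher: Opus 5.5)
Your reduction follows the paper's proof step for step: the components $3P_{n-3}+2P_{n-2}$, relation \eqref{p}, Proposition~\ref{1}, $F_{k+3}\equiv 0$, then a discriminant test. It also inherits the paper's sign error in Step~1. Since $2k-3=2(k-2)+1$, Proposition~\ref{1} gives $P_{2k-3}\equiv(-1)^{k-3}(F_k-1)=(-1)^{k-1}(2x-1)$, not $(-1)^{k}(2x-1)$; for example, $P_3=1=(-1)^{2}(F_3-1)$. Hence the first component is $\pm(8x-5)$, not $\pm(4x-1)$, and the norm condition is
\[
(8x-5)^2+(5x-1)^2+(3x+5)^2+(2x+1)^2=102x^2-56x+52\equiv 0 \pmod p,
\]
i.e.\ $51x^2-28x+26\equiv 0$. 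Its discriminant is $-8\cdot 5\cdot 113$, so the prime $181$ never enters. In the paper the same slip sits in the cross term $12P_{2k-6}P_{2k-2}$, where $P_{2k-6}$ is given the sign $(-1)^k$ instead of $(-1)^{k-3}$; that is why both of you reach $27x^2+8x+14$. The numerical check you planned would have caught this. For $p=7$, $m=10$ one finds $QR_{10}(5,7)\equiv 1+i+2j+6k \pmod 7$, of norm $42\equiv 0$, whereas your quadratic at $x=F_7\equiv -1$ gives $33\not\equiv 0 \pmod 7$.

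More seriously, the statement itself is false in both directions, so no bookkeeping fix can prove it.

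For the ``only if'' direction, take $p=5$, $m=14$ (note $7\equiv -3 \pmod{z(5)}$ with $z(5)=5$). Then $QR_{14}(3,5)\equiv 2+4i+2j+4k \pmod 5$, of norm $40\equiv 0$. Yet $5\equiv 5 \pmod 8$ and $\left(\frac{5}{181}\right)=1$, so neither (i) nor (ii) holds.

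For the ``if'' direction, your Step~3 does not deliver it. The condition $k\equiv s-1 \pmod{\pi(p)}$ with $s\equiv -2 \pmod{z(p)}$ is just a restatement of the hypothesis $k\equiv -3 \pmod{z(p)}$. Solvability of the quadratic does not make $F_{k+2}$ one of its roots. Take $p=31$, $m=54$ (here $z(31)=30$). Condition (ii) holds since $\left(\frac{31}{181}\right)=-1$. But $x=F_{29}\equiv 1 \pmod{31}$, the components are $3,4,8,3$, and the norm is $98\not\equiv 0 \pmod{31}$. Even your own quadratic gives $27+8+14=49\not\equiv 0$ there.

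The underlying reason is Cassini's identity: with $F_{k+3}\equiv 0$ it gives $x^2\equiv(-1)^{k+1}$. So the corrected norm condition is effectively linear in $x$: $7(11-4x)\equiv 0$ for odd $k$, and $28x\equiv -25$ for even $k$. For $p\ge 5$ with $p-2$ prime, this produces zero divisors only for $p=7$ (every admissible $m$) and for $p=5$ with $m/2\equiv 7 \pmod{20}$.
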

\begin{proof}
If $m$ is even, then $m=2k, k\in \mathbb{N}.$ A bi-periodic Perrin quaternion $QR_{m}\left(p-2,p\right)$ is a zero divisor in the quaternion algebra $Q_{\mathbb{Z}_p}$ if and only if its norm satisfies $$N\left( QR_{m}\left(p-2,p\right)\right) \equiv 0\left(\bmod \ p\right).$$ 
By the definition of the norm of a quaternion and Definition~\ref{3}, we have
\begin{align*}
N\left( QR_{m}\right) \equiv &\ 0\left(\bmod \ p\right)  \\ 
\Leftrightarrow\ &R_{m}^{2}\left(p-2,p\right)+R_{m+1}^{2}\left(p,p-2\right)+R_{m+2}^{2}\left(p-2,p\right) +R_{m+3}^{2}\left(p,p-2\right)\equiv 0\left(\bmod \ p\right) \\
\Leftrightarrow\ & R_{2k}^{2}\left(p-2,p\right)+R_{2k+1}^{2}\left(p,p-2\right)+R_{2k+2}^{2}\left(p-2,p\right) +R_{2k+3}^{2}\left(p,p-2\right)\equiv 0\left(\bmod \ p\right).
\end{align*}
Using Proposition \ref{3}, we obtain
\begin{align*}
N\left( QR_{m}\right) \equiv & \ 0\left(\bmod \ p\right)  \\ 
\Leftrightarrow\ & \left(3P_{2k-3}+2P_{2k-2}\right)^{2}+\left(3P_{2k-2}+2P_{2k-1}\right)^{2} +\left(3P_{2k-1}+2P_{2k}\right)^{2}\\ &\quad+\left(3P_{2k}+2P_{2k+1}\right)^{2} \equiv 0\left(\bmod \ p\right) \\ \Leftrightarrow\ & 9\left(P^{2}_{2k-3}+P^{2}_{2k-2}+P^{2}_{2k-1}+P^{2}_{2k}\right) +4\left(P^{2}_{2k-2}+P^{2}_{2k-1}+P^{2}_{2k}+P^{2}_{2k+1}\right) \\ &\quad +12\left(P_{2k-3}P_{2k-2}+P_{2k-2}P_{2k-1}+P_{2k-1}P_{2k}+P_{2k}P_{2k+1}\right) \equiv 0\left(\bmod \ p\right) \\ \Leftrightarrow\ & 13\left(P^{2}_{2k-2}+P^{2}_{2k-1}+P^{2}_{2k}\right) +9 P^{2}_{2k-3}+4 P^{2}_{2k+1} \\ &\quad +12\left(P_{2k-3}P_{2k-2}+P_{2k-2}P_{2k-1}+P_{2k-1}P_{2k}+P_{2k}P_{2k+1}\right) \equiv 0\left(\bmod \ p\right). \end{align*}
Using (\ref{p}), we have
\begin{align*}
N\left( QR_{m}\right)
&\equiv 0\left(\bmod \ p\right)\\
&\Leftrightarrow  13\left(P^{2}_{2k-2}+P^{2}_{2k-4}+ P^{2}_{2k} \right)
+9 P^{2}_{2k-6}
+4 P^{2}_{2k-2} \\
&\quad + 12\left(P_{2k-6} P_{2k-2}+ P_{2k-2} P_{2k-4}
+ P_{2k-4}P_{2k}+ P_{2k} P_{2k-2}\right)\\
&\equiv 0\left(\bmod \ p\right).
\end{align*}
%We note that in this proof, the Padovan number $P_{j}=P_{j}\left(p-2,p\right),$ for all $j$ which appear in this proof (in accordance with Proposition \ref{3}).\\
Using Proposition (\ref{1}), we obtain
\begin{align*}
N\!\left(QR_m\right)
&\equiv 0 \pmod p \Leftrightarrow
17\left(\left(-1\right)^{k-1}\left(F_{k+2}-1\right)\right)^2 \\
&\quad +13\left(\left(-1\right)^{k-2}\left(F_{k+1}-1\right)\right)^2
+13\left(\left(-1\right)^k\left(F_{k+3}-1\right)\right)^2 \\
&\quad +9\left(\left(-1\right)^{k-3}\left(F_k-1\right)\right)^2 \\
&\quad +12\left(\left(-1\right)^k\left(F_k-1\right)\right)
   \left(\left(-1\right)^{k-1}\left(F_{k+2}-1\right)\right) \\
&\quad +12\left(\left(-1\right)^{k-1}\left(F_{k+2}-1\right)\right)
   \left(\left(-1\right)^{k-2}\left(F_{k+1}-1\right)\right) \\
&\quad +12\left(\left(-1\right)^{k-2}\left(F_{k+1}-1\right)\right)
   \left(\left(-1\right)^k\left(F_{k+3}-1\right)\right) \\
&\quad +12\left(\left(-1\right)^k\left(F_{k+3}-1\right)\right)
   \left(\left(-1\right)^{k-1}\left(F_{k+2}-1\right)\right)
\equiv 0 \pmod p.
\end{align*}
Since $\frac{m}{2} \equiv -3 \pmod{z(p)},$ we have
$z(p) \mid (k+3).$ By Lemma~\ref{2}(v), this is equivalent to $p \mid F_{k+3}.$ Therefore, we obtain
\begin{align*}
N\left( QR_{m}\right)
&\equiv 0\left(\bmod \ p\right)
\Leftrightarrow 17\left( F_{k+2}-1\right)^{2}
+13\left( F_{k+1}-1\right)^{2}
+13
+9\left( F_{k}-1\right)^{2} \\
&\quad -12\left( F_{k}-1\right)\left( F_{k+2}-1\right)
-12\left( F_{k+2}-1\right)\left( F_{k+1}-1\right)
-12\left( F_{k+1}-1\right)
+12\left( F_{k+2}-1\right) \\
&\equiv 0\left(\bmod \ p\right).
\end{align*}
Since $p \mid F_{k+3}$, we have 
$F_{k+3} \equiv 0 \pmod{p}.$ By the recurrence relation of the Fibonacci sequence, it follows that $F_{k+3} \equiv 0 \pmod{p}\Leftrightarrow F_{k+2}\equiv -F_{k+1}\left(\bmod \ p\right),$
and also $F_{k+3}\equiv 0\left(\bmod \ p\right) \Leftrightarrow F_{k}\equiv -2F_{k+1}\left(\bmod \ p\right)$.
Thus, we obtain
\begin{align*}
N\left( QR_{m}\right)
& \equiv 0\left(\bmod \ p\right)\\
& \Leftrightarrow 17\left( F_{k+1}+1\right)^{2}
+13\left( F_{k+1}-1\right)^{2}
+13
+9\left( 2F_{k+1}+1\right)^{2} \\
&\quad -12\left(-2F_{k+1}-1\right)\left( -F_{k+1}-1\right)
-12\left(- F_{k+1}-1\right)\left( F_{k+1}-1\right) \\
&\quad -12\left( F_{k+1}-1\right)
+12\left(- F_{k+1}-1\right) \equiv 0\left(\bmod \ p\right)\\
& \Leftrightarrow 54F_{k+1}^2 -16F_{k+1} + 28 \equiv 0 \pmod{p}.
\end{align*}
Since $p$ is a prime with $p \neq 2$, we have
\begin{align*}
N\left(QR_m\right) \equiv 0 \pmod{p}
&\Leftrightarrow
27F_{k+1}^2 - 8F_{k+1} + 14 \equiv 0 \pmod{p}.
\end{align*}
The congruence 
\begin{equation}\label{3.1}
27x^{2}-8x+14 \equiv 0\pmod{p}
\end{equation}
has an integer solution modulo $p$ if and only if its discriminant
$\Delta = -8 \cdot 181$ is a quadratic residue modulo $p$. Thus, $\left(\frac{\Delta}{p}\right) = \left(\frac{-8 \cdot 181}{p}\right) = 1,$ see \cite{savinbook}.\\
Using the multiplicativity of the Legendre symbol,
$\left(\frac{ab}{p}\right)
=
\left(\frac{a}{p}\right)
\left(\frac{b}{p}\right),$
we obtain
\[
\left(\frac{-8\cdot181}{p}\right)
=
\left(\frac{-1}{p}\right)
\left(\frac{2}{p}\right)
\left(\frac{181}{p}\right).
\]
Using the properties of the Legendre symbol
$\left(\frac{-1}{p}\right)=(-1)^{\frac{p-1}{2}}
\ \text{and}\
\left(\frac{2}{p}\right)=(-1)^{\frac{p^2-1}{8}},$
we obtain
\[
\left(\frac{181}{p}\right)
=
(-1)^{\frac{p^2-1}{8}}
(-1)^{\frac{p-1}{2}}
=
(-1)^{\frac{(p-1)(p+5)}{8}}.
\]
Finally, since $181\equiv1\pmod4$, the quadratic reciprocity law yields
\[
\left(\frac{181}{p}\right)=\left(\frac{p}{181}\right).
\]
Therefore, the congruence \eqref{3.1} has a solution if and only if
\[
\left(\frac{p}{181}\right)
=
(-1)^{\frac{(p-1)(p+5)}{8}}.
\]
This yields the following explicit conditions:
\[
\left(\frac{\Delta}{p}\right) = 1 \ \Leftrightarrow \ 
\begin{cases}
p \equiv 1,3 \pmod{8} \text{ and } p \text{ is a quadratic residue modulo } 181, \\
\text{or} \\
p \equiv 5,7 \pmod{8} \text{ and } p \text{ is a quadratic non-residue modulo } 181.
\end{cases}
\]

Moreover, a solution of the congruence (\ref{3.1}) is 
$F_{k+1},$ so we have
$$ 27F^{2}_{k+1} -8F_{k+1}+14 \equiv 0\pmod p
\Leftrightarrow \left(2\cdot 27F_{k+1}-8\right)^{2}\equiv \Delta \pmod p $$
$$\Leftrightarrow \left(2\cdot 27F_{k+1}-8\right)^{2}\equiv -8\cdot 181 \pmod p$$
$$\Leftrightarrow \left(27F_{k+1}-4\right)^{2}\equiv -2\cdot 181 \pmod p.$$
Let $\alpha$ be an integer solution of the congruence $x^{2}\equiv -2\cdot 181 \left(\bmod \ p\right)$ such that $27 F_{k+1}\equiv \alpha + 4 \left(\bmod \ p\right).$
Since $p\neq 3$, the integer $27$ is invertible modulo $p$; let $\beta$ denote its inverse. It then follows that $F_{k+1}\equiv \beta (\alpha+4) \pmod{p}$. Therefore, there exists an integer $s\in\{0,1,\dots,\pi\left(p\right)-1\}$ such that $k\equiv s-1 \pmod{\pi\left( p\right)}$.
On the other hand, from the hypothesis, we have $\frac{m}{2} \equiv -3 \pmod{z\left( p\right)}\Leftrightarrow k \equiv -3 \pmod{z\left( p\right)}.$ 
By Lemma \ref{2}, it results
$z\left(p\right)\mid \pi\left(p\right)$, and in fact $\pi\left(p\right)\in \left\{z\left(p\right),2z\left(p\right),4z\left(p\right)\right\}$. Therefore, we have $$s-1\equiv -3 \pmod{z\left( p\right)} \Leftrightarrow
s\equiv z\left( p\right)-2 \pmod{z\left( p\right)}.$$
It follows that $s\in \left\{z\left(p\right)-2, 2z\left(p\right)-2, 3z\left(p\right)-2, 4z\left(p\right)-2\right\}$. 
 \end{proof}
 
\medskip
We now consider the case $p=181$ in Theorem \ref{3}.

\begin{corollary}
Let \(m\) be an even positive integer such that 
\(\frac{m}{2}\equiv -3 \pmod{z(181)}\).
Then the bi-periodic Perrin quaternion \(QR_{m}(a,b)\) with twin prime coefficients
\(a=179\) and \(b=181\) is a zero divisor in the quaternion algebra
\(Q_{\mathbb{Z}_{181}}\) if and only if
$$\frac{m}{2}\equiv 47\left(\bmod \ 90\right).$$
\end{corollary}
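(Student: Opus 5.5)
The plan is to reuse the reduction from the proof of Theorem \ref{3} without change up to the quadratic congruence, and then treat the degenerate case $p=181$ directly. The steps before the discriminant analysis did not use $p\neq 181$. They were: writing $m=2k$, expressing $N(QR_m)$ through Proposition \ref{padovan-perrin} and the quaternion relation $QR_n=3QP_{n-3}+2QP_{n-2}$, reducing odd-index Padovan terms with \eqref{p}, passing to Fibonacci numbers via Proposition \ref{1}, and using $F_{k+3}\equiv 0$ (from $k\equiv -3 \pmod{z(p)}$ and Lemma \ref{2}(v)). Only $p\neq 2,3$ was needed. So for $p=181$ we again obtain
\[
N(QR_m)\equiv 0 \pmod{181}\ \Leftrightarrow\ 27F_{k+1}^2-8F_{k+1}+14\equiv 0 \pmod{181}.
\]

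Next I would exploit the vanishing discriminant. Since $\Delta=-8\cdot 181\equiv 0 \pmod{181}$, completing the square as in Theorem \ref{3} gives $(27F_{k+1}-4)^2\equiv 0$. Because $181$ is prime, this is equivalent to $27F_{k+1}\equiv 4 \pmod{181}$. From $27\cdot 67=1809=10\cdot 181-1$ we get $27^{-1}\equiv -67\equiv 114$. Hence the condition becomes the single linear condition
\[
F_{k+1}\equiv 4\cdot 114\equiv 94 \pmod{181}.
\]
In particular there is no Legendre-symbol condition: a solution always exists, namely the double root.

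The remaining work is to translate $F_{k+1}\equiv 94$, together with the hypothesis $k\equiv -3 \pmod{z(181)}$, into a congruence for $k$.

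First, determine $z(181)$ and $\pi(181)$. Since $181\equiv 1 \pmod 5$, we have $\pi(181)\mid 180$. I would compute $F_n \bmod 181$ (or use fast doubling) to find $z(181)$, and then apply Lemma \ref{2}(ii)--(iv) to obtain $\pi(181)$. The corollary's modulus suggests $\pi(181)=90$.

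Second, list all $n$ in one period with $F_n\equiv 94 \pmod{181}$. Then intersect the set $\{k: k+1\equiv n \pmod{\pi(181)}\}$ with $k\equiv -3 \pmod{z(181)}$. The hypothesis fixes $F_{k+3}\equiv 0$, so $F_{k+1}\equiv -F_{k+2}$. Moreover, $(F_{k+2},F_{k+3})\equiv (c,0)$ for some $c$ determined by which multiple of $z(181)$ the index $k+3$ lies at. Thus $F_{k+1}\equiv -c$, and the condition becomes $c\equiv -94$. This selects which multiple of $z(181)$ (within the period) $k+3$ must be, giving a single residue class of $k$ modulo $\pi(181)$. The expected outcome is $k\equiv 47 \pmod{90}$.

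I expect the main obstacle to be this last step. It is a finite but error-prone computation of $z(181)$, $\pi(181)$ and the values $F_{jz(181)-1} \bmod 181$. Consistency must be checked carefully: with $k\equiv 47$, we need $z(181)\mid 50$ and $F_{48}\equiv 94$. I would verify these numerically before writing the argument, and adjust the residue if the computation disagrees.
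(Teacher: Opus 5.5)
Your route is the paper's: reuse the reduction from Theorem~3, use the vanishing discriminant to get $F_{k+1}\equiv 94$, then translate via $\pi(181)=90$. There are two problems, though, and the first sits exactly at the step you flag as the main obstacle.

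\textbf{The final step does not give a residue class.} In fact $z(181)=\pi(181)=90$. Indeed $181\mid L_{45}$, so $181\mid F_{90}=F_{45}L_{45}$, while $F_{45},F_{30},F_{18}\equiv 47,164,50 \pmod{181}$; every proper divisor of $90$ divides $45$, $30$ or $18$, and Lemma~2(ii) then gives $\pi=z$. So your own consistency check fails, since $90\nmid 50$.

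The hypothesis therefore reads $k\equiv 87\pmod{90}$. This forces $(F_k,F_{k+1},F_{k+2},F_{k+3})\equiv(2,-1,1,0)\pmod{181}$. In your notation $c\equiv F_{89}\equiv 1$ for every admissible $k$, so $c\equiv -94$ never holds. The intersection you plan to compute is empty, not a single class.

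The corollary is thus true only vacuously: no admissible $m$ gives a zero divisor, and no admissible $m$ satisfies $m/2\equiv 47\pmod{90}$. ``Adjusting the residue'' would be the wrong repair. You can finish in one of two ways:
\begin{itemize}
\item as the paper does, with the unconditional equivalence $F_{k+1}\equiv 94\Leftrightarrow k\equiv 47\pmod{90}$, valid because $F_{48}\equiv 94$ is the only occurrence of $94$ in a period, and with no intersection at all;
\item or by stating explicitly that both sides are false for every admissible $m$.
\end{itemize}

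\textbf{The reused reduction has a sign slip.} The reduction you take over ``without change'' contains an error already present in the paper. In the expansion,
$12P_{2k-6}P_{2k-2}=12\,(-1)^{k-3}(F_k-1)\,(-1)^{k-1}(F_{k+2}-1)=+12(F_k-1)(F_{k+2}-1)$.
The paper writes $(-1)^k$ for the first sign and obtains $-12(F_k-1)(F_{k+2}-1)$.

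With $x=F_{k+1}$, the correct reduced norm is $102x^2+56x+52=2(51x^2+28x+26)$, not $2(27x^2-8x+14)$. A direct check: for $p=7$, $k=5$, the raw recurrences give the components $1,1,2,6$ of $QR_{10}(5,7)$ mod $7$. Its norm is $42\equiv 0$, whereas $27-8+14\not\equiv 0\pmod 7$.

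The correct discriminant is $-4520\equiv 5\pmod{181}$, so the ``double root $94$'' is an artifact of the slip. The corollary survives only because the hypothesis pins $x\equiv -1$, where the discrepancy $24(2x+1)(x+1)$ vanishes.

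\textbf{A clean argument.} Take admissible $m=2k$, so $k$ is odd. Proposition~1 together with $P_{2j+1}\equiv P_{2j-2}$ gives the four components of $QR_m$ as $3,4,8,3\pmod{181}$. Hence $N(QR_m)\equiv 98\not\equiv 0$. Combined with the incompatibility of $m/2\equiv 47$ and $m/2\equiv 87\pmod{90}$, this proves the statement.
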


\begin{proof}
If $m$ is even, then $m=2k, k\in \mathbb{N}.$
Analogously as in the proof of Theorem \ref{3}, we obtain that
the bi-periodic Perrin quaternion $QR_{m}\left(179,181\right)$ is a zero divisor in the quaternion algebra $Q_{\mathbb{Z}_{181}}$ if and only if 
$$\left(27F_{k+1}-4\right)^{2}\equiv 0\left(\bmod \ 181\right)
\Leftrightarrow 
27F_{k+1}\equiv 4\left(\bmod \ 181\right).$$
Since $\gcd(27,181)=1$, it follows that the last congruence has a unique solution modulo $181$ and this is $4\cdot 27^{\varphi\left(181\right)}\left(\bmod \ 181\right)=4\cdot 27^{180}\left(\bmod \ 181\right)\equiv 4\cdot 114\left(\bmod \ 181\right),$ where $\varphi$ is the Euler's function. So, we have
$$27F_{k+1}\equiv 4\left(\bmod \ 181\right)\Leftrightarrow 
F_{k+1}\equiv 4\cdot 114\left(\bmod \ 181\right)\Leftrightarrow 
F_{k+1}\equiv 94\left(\bmod \ 181\right).$$
From \cite{renault}, the Pisano period $\pi\left(181\right)=90$ and
$$F_{k+1}\equiv 94\left(\bmod \ 181\right)\Leftrightarrow 
k+1\equiv 48\left(\bmod \ 90\right) \Leftrightarrow 
k\equiv 47\left(\bmod \ 90\right)\Leftrightarrow 
\frac{m}{2}\equiv 47\left(\bmod \ 90\right).$$
\end{proof}

We now state the corresponding result for odd \(m\).

\begin{theorem}\label{4}
Let $m$ be an odd integer and $\frac{m-1}{2} \equiv -3 \pmod{z\left( p\right)}$. Then, the bi-periodic Perrin quaternion $QR_{m}\left(a,b\right)$ with twin prime coefficients $a = p-2$ and $b = p$ for primes $p \neq 7, 13$, is a zero divisor in the quaternion algebra $Q_{\mathbb{Z}_p}$ if and only if the Jacobi symbol satisfies $\left(\frac{p}{13 \cdot 239}\right) = 1$ and $\frac{m-1}{2} \equiv s-1 \pmod{\pi(p)}$, where $s \in \{ z(p)-2, 2z(p)-2, 3z(p)-2, 4z(p)-2 \}.$

\end{theorem}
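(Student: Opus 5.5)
The plan is to run the argument of Theorem~\ref{3} again with $m=2k+1$. The hypothesis $\frac{m-1}{2}\equiv -3 \pmod{z(p)}$ means $z(p)\mid k+3$. By Lemma~\ref{2}(v) this gives $F_{k+3}\equiv 0 \pmod p$. Hence $F_{k+2}\equiv -F_{k+1}$, $F_k\equiv -2F_{k+1}$ and $F_{k+4}\equiv F_{k+2}\equiv -F_{k+1} \pmod p$. As before, $QR_m$ is a zero divisor iff $N(QR_m)\equiv 0 \pmod p$.

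\textbf{Step 1: reduce the norm to a quadratic in $u=F_{k+1}$.} By the proposition $QR_n=3QP_{n-3}+2QP_{n-2}$, the four components of $QR_{2k+1}$ are $3P_j+2P_{j+1}$ for $j=2k-2,\dots,2k+1$. Relation \eqref{p} replaces the odd-index terms: $P_{2k-1}\equiv P_{2k-4}$ and $P_{2k+1}\equiv P_{2k-2}$. Proposition~\ref{1} then expresses everything through Fibonacci numbers, and after the substitutions above we get, with $u=F_{k+1}$,
\[
P_{2k-4}\equiv(-1)^k(u-1),\quad P_{2k-2}\equiv(-1)^k(u+1),\quad P_{2k}\equiv(-1)^{k+1},\quad P_{2k+2}\equiv(-1)^k(u+1).
\]
The common sign $(-1)^k$ disappears on squaring. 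The four components then become $5u+1$, $3u-5$, $2u-1$ and $5u+5$. Summing their squares gives
\[
N(QR_m)\equiv 63u^2+26u+52 \pmod p .
\]

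\textbf{Step 2: solvability via the discriminant.} The discriminant is $\Delta=26^2-4\cdot 63\cdot 52=-4\cdot 13\cdot 239$. This explains the exclusions in the statement: $p=7$ kills the leading coefficient $63=7\cdot 9$, and $p=13$ makes $\Delta\equiv 0$. For $p\ne 2,3,7,13,239$ the congruence is equivalent to
\[
(63u+13)^2\equiv -13\cdot 239 \pmod p,
\]
which is solvable iff $\left(\frac{-3107}{p}\right)=1$. Since $-3107\equiv 1 \pmod 4$, quadratic reciprocity (in Jacobi-symbol form, multiplicativity over $13$ and $239$) gives $\left(\frac{-3107}{p}\right)=\left(\frac{p}{13\cdot 239}\right)$. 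Checking this reciprocity step is the main place where care is needed. One either uses the Jacobi reciprocity for the odd number $3107$ directly, or splits as $\left(\frac{13}{p}\right)\left(\frac{-239}{p}\right)$ with $13\equiv 1$ and $-239\equiv 1 \pmod 4$. The case $p=239$ also needs a glance, since the symbol degenerates there.

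\textbf{Step 3: the condition on the index.} As in Theorem~\ref{3}, take $\alpha$ with $\alpha^2\equiv -3107 \pmod p$ and $63u+13\equiv\alpha$. Because $63$ is invertible modulo $p$, this pins down $F_{k+1}$ modulo $p$. So $k+1\equiv s \pmod{\pi(p)}$ for some $s$, i.e.\ $k\equiv s-1 \pmod{\pi(p)}$. Combining this with $k\equiv -3 \pmod{z(p)}$ and $\pi(p)\in\{z(p),2z(p),4z(p)\}$ from Lemma~\ref{2} gives $s\equiv z(p)-2 \pmod{z(p)}$. Hence $s\in\{z(p)-2,\,2z(p)-2,\,3z(p)-2,\,4z(p)-2\}$.
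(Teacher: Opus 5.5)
Your Steps 1 and 2 are correct and agree with the paper's computation. The components of $QR_{2k+1}$ are $(-1)^k(5u+1,\,3u-5,\,2u-1,\,5u+5)$ with $u=F_{k+1}$, the norm is $63u^2+26u+52$, and this quadratic has a root in $\mathbb{Z}_p$ exactly when $\left(\frac{p}{13\cdot 239}\right)=1$. The case $p=239$ needs no separate treatment, since $237=3\cdot 79$ is not prime.

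The gap is in Step 3, and it is the same gap as in the paper's own proof. You begin with ``take $\alpha$ with $63u+13\equiv\alpha$'', which \emph{assumes} that $u=F_{k+1}$ is a root. The Jacobi-symbol condition only guarantees that \emph{some} residue $x$ is a root, not that $F_{k+1}\equiv x$, so the ``if'' direction is never argued. The index condition you end with is also empty. Since $\pi(p)/z(p)\in\{1,2,4\}$, every $k\equiv -3\pmod{z(p)}$ already satisfies $k\equiv jz(p)-3\pmod{\pi(p)}$ for some $j\in\{1,2,3,4\}$. So ``$k\equiv s-1$, $s\in\{z(p)-2,\dots,4z(p)-2\}$'' follows from the hypothesis alone and carries no information about the root.

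In fact the missing implication is false, so no repair of Step 3 can prove the statement as written. Put $z=z(p)$ and $k+3=jz$. From $F_{n+z}=F_nF_{z+1}+F_{n-1}F_z\equiv F_{z-1}F_n\pmod p$ you get $F_{k+2}\equiv F_{z-1}^{\,j}$, hence $u\equiv -F_{z-1}^{\,j}$. Cassini's identity gives $F_{z-1}^2\equiv(-1)^z$, so $u^2\equiv\pm1$. The norm is therefore one of the following:
$89$ if $u\equiv-1$;
$141=3\cdot 47$ if $u\equiv 1$;
$26u-11$ with $u^2\equiv-1$.
For $p\ge 5$ these vanish only when $p\in\{47,89,797\}$, and $45$, $87$, $795$ are composite, so none of these is an admissible twin-prime $p$.

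Concretely, take $p=19$. Then $-3107\equiv 9\pmod{19}$, so $\left(\frac{19}{13\cdot 239}\right)=1$, and $z(19)=\pi(19)=18$. Taking $k=15$, $m=31$, direct computation of the recurrences gives $QR_{31}\equiv 4+8i+3j\pmod{19}$. Its norm is $89\equiv 13\not\equiv 0$, so $QR_{31}$ is invertible although the statement says it is a zero divisor.

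Thus your argument, like the paper's, only yields the ``only if'' direction. The correct analysis substitutes the actual values $-F_{z-1}^{\,j}$ of $F_{k+1}$ into $63u^2+26u+52$. This shows that, under the hypothesis, $QR_m$ is never a zero divisor for twin-prime $p$.
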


\begin{proof}
If $m$ is odd, then $m=2k+1, k\in \mathbb{N}.$ A bi-periodic Perrin quaternion $QR_{m}\left(p-2,p\right)$ is a zero divisor in the quaternion algebra $Q_{\mathbb{Z}_p}$ if and ony if the norm $$N\left( QR_{m}\left(p-2,p\right)\right) \equiv
0\left(\bmod \ p\right).$$ 
By the definition of the norm of a quaternion and Definition~\ref{3}, we have
\begin{align*}
N\left( QR_{m}\right) \equiv & \left(\bmod \ p\right)\\
\Leftrightarrow\ & R_{m}^{2}\left(p,p-2\right)+R_{m+1}^{2}\left(p-2,p\right)+R_{m+2}^{2}\left(p,p-2\right)+R_{m+3}^{2}\left(p-2,p\right)\equiv 0\left(\bmod \ p\right) \\
\Leftrightarrow\ & R_{2k+1}^{2}\left(p,p-2\right)+R_{2k+2}^{2}\left(p-2,p\right)+R_{2k+3}^{2}\left(p,p-2\right)+R_{2k+4}^{2}\left(p-2,p\right)\equiv 0\left(\bmod \ p\right). 
\end{align*}
Using Proposition \ref{3}, we obtain
\begin{align*}
N\left( QR_{m}\right) \equiv 0 & \left(\bmod \ p\right)\\
\Leftrightarrow\ & \left(3P_{2k-2}+2P_{2k-1}\right)^{2}+ \left(3P_{2k-1}+2P_{2k}\right)^{2}+ \left(3P_{2k}+2P_{2k+1}\right)^{2}\\
&\quad+\left(3P_{2k+1}+2P_{2k+2}\right)^{2} \equiv 0\left(\bmod \ p\right) \\
\Leftrightarrow\ & 9\left(P^{2}_{2k-2}+P^{2}_{2k-1}+ P^{2}_{2k}+P^{2}_{2k+1} \right)+ 
4\left( P^{2}_{2k-1}+P^{2}_{2k}+ P^{2}_{2k+1}+P^{2}_{2k+2}\right) \\
&\quad + 12\left( P_{2k-2} P_{2k-1}+ P_{2k-1} P_{2k}+ P_{2k} P_{2k+1}+P_{2k+1} P_{2k+2}\right) \equiv 0\left(\bmod \ p\right) \\
\Leftrightarrow\ &13\left(P^{2}_{2k-1}+P^{2}_{2k}+ P^{2}_{2k+1} \right)+9 P^{2}_{2k-2}+4 P^{2}_{2k+2} \\
&\quad + 12\left(P_{2k-2} P_{2k-1}+ P_{2k-1} P_{2k}+ P_{2k} P_{2k+1}+ P_{2k+1} P_{2k+2}\right) \equiv 0\left(\bmod \ p\right).
\end{align*}
Using (\ref{p}), we have
$$N\left( QR_{m}\right)  \equiv 0\left(\bmod \ p\right)\Leftrightarrow 13\left(P^{2}_{2k-4}+P^{2}_{2k}+ P^{2}_{2k-2} \right)+9 P^{2}_{2k-2}+4 P^{2}_{2k+2}+$$
$$+ 12\left(P_{2k-2} P_{2k-4}+ P_{2k-4} P_{2k}+ P_{2k} P_{2k-2}+ P_{2k-2} P_{2k+2}\right)\equiv 0\left(\bmod \ p\right).$$
Using Proposition (\ref{1}), we obtain
\begin{align*}
N\left( QR_{m}\right) &\equiv 0\left(\bmod \ p\right) \Leftrightarrow\ 
13 \left( F_{k+1}-1\right)^{2}
+22 \left( F_{k+2}-1\right)^{2}
+13 \left( F_{k+3}-1\right)^{2}
+4 \left( F_{k+4}-1\right)^{2}\\
&\qquad -12 \left( F_{k+2}-1\right)\left( F_{k+1}-1\right)
+12 \left( F_{k+1}-1\right)\left( F_{k+3}-1\right)\\
&\qquad -12 \left( F_{k+3}-1\right)\left( F_{k+2}-1\right)
+12 \left( F_{k+2}-1\right)\left( F_{k+4}-1\right)
\equiv 0\left(\bmod \ p\right).
\end{align*}

Since  $\frac{m-1}{2} \equiv -3 \pmod{z\left( p\right)}$, we have $ z\left( p\right)\mid \left(k+3\right)$. By Lemma \ref{2}(v), this is equivalent to $p\mid F_{k+3}.$ Therefore, we obtain
\begin{align*}
N\left( QR_{m}\right)  &\equiv 0\left(\bmod \ p\right)\Leftrightarrow\\
&13\left( F_{k+1}-1\right)^{2}+
22\left( F_{k+2}-1\right)^{2}+13+ 4 \left( F_{k+4}-1\right)^{2}-12 \left( F_{k+2}-1\right) \left( F_{k+1}-1\right)\\
&\quad -12\left( F_{k+1}-1\right)+12\left( F_{k+2}-1\right)+12 \left( F_{k+2}-1\right) \left( F_{k+4}-1\right)
\equiv 0\left(\bmod \ p\right).
\end{align*}

Since $p\mid F_{k+3}$, we have
$F_{k+3}\equiv 0\left(\bmod \ p\right)$. From the recurrence relation of the Fibonacci
sequence, it follows that $F_{k+3}\equiv 0\left(\bmod \ p\right)\Leftrightarrow F_{k+2}\equiv -F_{k+1}\left(\bmod \ p\right)$. Then $F_{k+4}\equiv -F_{k+1}\left(\bmod \ p\right)$. Thus, we obtain
\begin{align*}
N\left( QR_{m}\right)  &\equiv 0\left(\bmod \ p\right)\Leftrightarrow\\
& 13\left( F_{k+1}-1\right)^{2}+
22\left( F_{k+1}+1\right)^{2}+13+ 4  \left( F_{k+1}+1\right)^{2}-12 \left( -F_{k+1}-1\right) \left( F_{k+1}-1\right)\\
&\quad -12\left( F_{k+1}-1\right)+12\left( -F_{k+1}-1\right)+12 \left( -F_{k+1}-1\right) \left( -F_{k+1}-1\right)
\equiv 0\left(\bmod \ p\right)\\
&\Leftrightarrow 63F^{2}_{k+1} +26F_{k+1}+52 \equiv 0\left(\bmod \ p\right).
\end{align*}

The congruence
\begin{equation}\label{3.2}
63x^{2} +26x+52 \equiv 0\left(\bmod \ p\right)
\end{equation}
has an integer solution modulo $p$ if and only if its discriminant $\Delta = -4\cdot13\cdot 239$ is a quadratic residue modulo $p$, i.e., $\left(\frac{\Delta}{p}\right) = \left(\frac{-4\cdot13\cdot 239}{p}\right)=1,$ see \cite{savinbook}. Note that $p \neq 239$, since in that case $p=239$ and $p-2 = 237$ would not form a twin prime pair.
%For the same reason $p\neq 2.$ From the hypothesis  $p\neq 13.$\\
Using the multiplicativity of the Legendre symbol, we have
$$\left(\frac{-4\cdot 13\cdot 239}{p}\right) = \left(-1\right)^{\frac{p-1}{2}}\left(\frac{13}{p}\right) \left(\frac{239}{p}\right).$$
Using the quadratic reciprocity law, we have
$\left(\frac{13}{p}\right)= \left(\frac{p}{13}\right)$ and $\left(\frac{239}{p}\right)=\left(-1\right)^{\frac{p-1}{2}} \left(\frac{p}{239}\right)$. By combining these results, we obtain
$\left(\frac{\Delta}{p}\right)=\left(\frac{p}{13}\right) \left(\frac{p}{239}\right).$
So, $$\left(\frac{\Delta}{p}\right)=1\Leftrightarrow \left(\frac{p}{13}\right)\left(\frac{p}{239}\right)=1.$$
Using the fact that the Jacobi symbol $\left(\frac{p}{13\cdot 239}\right)$ is a product of two Legendre symbols, namely $\left(\frac{p}{13\cdot 239}\right)=\left(\frac{p}{13}\right) \left(\frac{p}{239}\right),$ we obtain
\begin{equation*}
\left(\frac{\Delta}{p}\right)=1\Leftrightarrow\left(\frac{p}{13\cdot 239}\right)=1.
\end{equation*}

Moreover, a solution of the congruence (\ref{3.2}) is $F_{k+1},$ so we have
$$63F^{2}_{k+1} +26F_{k+1}+52 \equiv 0\left(\bmod \ p\right)
\Leftrightarrow \left(2\cdot63\cdot F_{k+1}+26\right)^{2}\equiv -4\cdot 13\cdot 239 \left(\bmod \ p\right)$$
$$\Leftrightarrow \left(63 F_{k+1}+13\right)^{2}\equiv -13\cdot 239 \left(\bmod \ p\right)
\Leftrightarrow \left(63 F_{k+1}+13\right)^{2}\equiv -3107 \left(\bmod \ p\right).$$
Let $\alpha$ be an integer solution of the congruence $x^{2}\equiv -3107 \left(\bmod \ p\right)$ such that $63 F_{k+1}\equiv \alpha -13 \left(\bmod \ p\right).$ Since $p\neq 3$ and $p\neq 7$, the integer $63$ is invertible modulo $p$. Let $\beta$ denote its inverse modulo $p$. Then it follows that $F_{k+1}\equiv \beta (\alpha-13) \pmod{p}.$
Consequently, there exists an integer $s\in\{0,1,\dots,\pi\left(p\right)-1\}$ such that
$k\equiv s-1 \pmod{\pi\left( p\right)}.$
From the hypothesis, we have $\frac{m-1}{2} \equiv -3 \pmod{z\left( p\right)}\Leftrightarrow k \equiv -3 \pmod{z\left( p\right)}.$ 
By Lemma \ref{2}, it results
$z\left(p\right)\mid \pi\left(p\right)$, and in fact $\pi\left(p\right)\in \left\{z\left(p\right),2z\left(p\right),4z\left(p\right)\right\}$. Hence, we have $$s-1\equiv -3 \pmod{z\left( p\right)} \Leftrightarrow
s\equiv z\left( p\right)-2 \pmod{z\left( p\right)}.$$
It follows that $s\in \left\{z\left(p\right)-2, 2z\left(p\right)-2, 3z\left(p\right)-2, 4z\left(p\right)-2\right\}$. 
\end{proof}

We now consider the case $p=7$ in Theorem \ref{4}. 

\begin{corollary}\label{2}
Let \(m\) be an odd integer such that 
\(\frac{m-1}{2}\equiv -3 \pmod{z(7)}\).
Then the bi-periodic Perrin quaternion \(QR_{m}(5,7)\) is a zero divisor in the quaternion algebra
\(Q_{\mathbb{Z}_{7}}\) if and only if
$$\frac{m-1}{2}\equiv 4, 10\left(\bmod \ 16\right).$$  
\end{corollary}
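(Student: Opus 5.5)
The plan is to reuse the computation in the proof of Theorem \ref{4} up to the point where it reduces the norm condition to a congruence in $F_{k+1}$. That part of the argument is valid for every prime $p\ge 5$, and in particular for $p=7$. Only afterwards does it invert $63$ and complete the square, and that step fails when $7\mid 63$.

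Write $m=2k+1$. The hypothesis $\frac{m-1}{2}\equiv -3 \pmod{z(7)}$ means $z(7)\mid k+3$, so $F_{k+3}\equiv 0 \pmod 7$ by Lemma \ref{2}(v). Exactly as in the proof of Theorem \ref{4}, using Proposition \ref{padovan-perrin}, relation \eqref{p} and Proposition \ref{1}, we get
\[
N\left(QR_m(5,7)\right)\equiv 0 \pmod 7 \Leftrightarrow 63F_{k+1}^2+26F_{k+1}+52\equiv 0 \pmod 7 .
\]
Modulo $7$ the quadratic term vanishes, since $63\equiv 0$, $26\equiv 5$ and $52\equiv 3$. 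The congruence therefore becomes linear: $5F_{k+1}+3\equiv 0 \pmod 7$. Since $5^{-1}\equiv 3 \pmod 7$, this is equivalent to $F_{k+1}\equiv 5 \pmod 7$. This linearization replaces the Legendre-symbol analysis of Theorem \ref{4}, and it is why $p=7$ had to be excluded there.

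Next I would use the Fibonacci data for $p=7$. We have $z(7)=8$, since $F_8=21$ is the first term divisible by $7$. Because $8\equiv 0 \pmod 4$, Lemma \ref{2}(iii) gives $\pi(7)=16$. Listing $F_0,\dots,F_{15}$ modulo $7$ gives
\[
0,1,1,2,3,5,1,6,0,6,6,5,4,2,6,1,
\]
so $F_n\equiv 5 \pmod 7$ exactly when $n\equiv 5$ or $11 \pmod{16}$. Hence $F_{k+1}\equiv 5 \pmod 7$ if and only if $k\equiv 4$ or $10 \pmod{16}$, which is the stated condition $\frac{m-1}{2}\equiv 4,10 \pmod{16}$.

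The main obstacle is compatibility with the standing hypothesis. The hypothesis forces $k\equiv 5 \pmod 8$, whereas $4$ and $10$ are not $\equiv 5 \pmod 8$. I would therefore also check directly that for $k\equiv 5,13 \pmod{16}$ we have $F_{k+1}\equiv F_6,F_{14}\equiv 1,6 \pmod 7$, neither of which is $5$. So under the hypothesis both sides of the equivalence are false. The biconditional still holds, but its content is that no such $QR_m(5,7)$ is a zero divisor, and I would state this explicitly as a remark. Finally, I would double-check the coefficient reduction $63,26,52 \pmod 7$ and make sure that no step before the displayed quadratic divided by $7$.
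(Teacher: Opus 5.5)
Your proposal is correct and follows the paper's route: you reuse the reduction from Theorem~\ref{4} to $63F_{k+1}^2+26F_{k+1}+52\equiv 0 \pmod 7$, observe that it collapses to the linear congruence $F_{k+1}\equiv 5 \pmod 7$, and then read off $k\equiv 4,10 \pmod{16}$ from the Fibonacci residues modulo $7$, using $\pi(7)=16$. Your extra check is also right, and the paper does not point it out: the hypothesis $z(7)=8\mid k+3$ forces $k\equiv 5,13 \pmod{16}$, where $F_{k+1}\equiv 1,6 \pmod 7$, so both sides of the equivalence are always false and no $QR_m(5,7)$ satisfying the hypothesis is a zero divisor.
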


\begin{proof}
If $m$ is odd, then $m=2k+1, k\in \mathbb{N}.$
Analogously as in the proof of Theorem \ref{4}, we obtain that
a bi-periodic Perrin quaternion $QR_{m}\left(5,7\right)$ with  is a zero divisor in the quaternion algebra $Q_{\mathbb{Z}_{7}}$ if and only if 
\begin{align*}
63F^{2}_{k+1} +26F_{k+1}+52 &\equiv 0\left(\bmod \ 7\right) \Leftrightarrow 
5F_{k+1}\equiv 4\left(\bmod \ 7\right).
\end{align*}
Since $\gcd(5,7)=1$, the last congruence is equivalent to
$$F_{k+1}\equiv 3\cdot 4\left(\bmod \ 7\right)\Leftrightarrow F_{k+1}\equiv 5\left(\bmod \ 7\right).$$
From \cite{renault}, we have
$$ k+1\equiv 5, 11\left(\bmod \ 16\right)
\Leftrightarrow k \equiv 4, 10\left(\bmod \ 16\right)
\Leftrightarrow \frac{m-1}{2} \equiv 4, 10\left(\bmod \ 16\right).$$
\end{proof}

We now examine the application of Theorem \ref{4} to the case \(p=13\). Somewhat surprisingly, we show that in this setting all bi-periodic Perrin quaternions are invertible in the quaternion algebra \(Q_{\mathbb{Z}_{13}}\).

\begin{corollary}\label{3}
Let \(m\) be an odd integer such that 
\(\frac{m-1}{2}\equiv -3 \pmod{z(13)}\).
Then all bi-periodic Perrin quaternions \(QR_{m}(11,13)\) are invertible in the quaternion algebra
$Q_{\mathbb{Z}_{13}}$.
\end{corollary}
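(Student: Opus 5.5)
The plan is to reuse the reduction in the proof of Theorem \ref{4} up to the point where the quadratic congruence \eqref{3.2} appears. Those steps never divide by anything depending on $p$; the exclusions $p\neq 7,13$ only enter later, in the discriminant and Legendre-symbol analysis. So with $p=13$, $a=11$, $b=13$, and $m=2k+1$ satisfying $k\equiv -3 \pmod{z(13)}$, so that $13\mid F_{k+3}$ by Lemma \ref{2}(v), we get
\[
N\left(QR_m(11,13)\right)\equiv 0 \pmod{13} \Leftrightarrow 63F_{k+1}^2+26F_{k+1}+52\equiv 0 \pmod{13}.
\]
I would briefly recheck each step of that chain: Proposition \ref{padovan-perrin} (in its quaternion form), relation \eqref{p}, Proposition \ref{1}, and the substitutions $F_{k+2}\equiv -F_{k+1}$ and $F_{k+4}\equiv -F_{k+1}$. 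This confirms that each is an equivalence valid modulo $13$.

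Next I reduce the coefficients modulo $13$. Since $26\equiv 0$, $52\equiv 0$ and $63\equiv 11 \pmod{13}$, the condition becomes $11F_{k+1}^2\equiv 0 \pmod{13}$. Because $\gcd(11,13)=1$ and $13$ is prime, this is equivalent to $13\mid F_{k+1}$.

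The key step is then to show that $13\mid F_{k+1}$ is incompatible with the hypothesis $13\mid F_{k+3}$. From $F_{k+3}=F_{k+2}+F_{k+1}$ we would get $13\mid F_{k+2}$ as well. But consecutive Fibonacci numbers are coprime, since $\gcd(F_{n},F_{n+1})=1$ by the Euclidean algorithm along the recurrence. Alternatively, one can argue via Lemma \ref{2}(v): $z(13)=7$, and $7$ cannot divide both $k+1$ and $k+3$, because it would then divide $2$. Either way we reach a contradiction.

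Hence $N\left(QR_m(11,13)\right)\not\equiv 0 \pmod{13}$ for every such $m$. Since a quaternion in $Q_{\mathbb{Z}_{13}}$ with nonzero norm is invertible (its inverse is $\overline{X}/N(X)$), all these $QR_m(11,13)$ are invertible. The only real obstacle I anticipate is making sure the reduction from Theorem \ref{4} genuinely holds modulo $13$. In particular, the expansion that produces the coefficients $13$, $22$ and $4$ must not have implicitly used $p\neq 13$. I would verify this by redoing that computation with the twin-prime values substituted directly.
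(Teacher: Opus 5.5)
Your proposal is correct and follows essentially the same route as the paper: specialize the norm reduction from the proof of Theorem~\ref{4} to $p=13$, observe that \eqref{3.2} collapses to $11F_{k+1}^2\equiv 0 \pmod{13}$, and contradict $13\mid F_{k+3}$ via $13\mid F_{k+2}$. The paper reaches that contradiction by descending the Fibonacci recurrence, which is your coprimality argument in another form. Your check for hidden uses of $p\neq 13$ also comes out as you expected: the expansion yielding $63F_{k+1}^2+26F_{k+1}+52$ is an identity with integer coefficients and no divisions, so it holds modulo $13$.
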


\begin{proof}
If $m$ is odd, then $m=2k+1,$ $k\in \mathbb{N}.$
It is known that a quaternion is invertible in the quaternion algebra $Q_{\mathbb{Z}_{13}}$
if and only if it is not a zero divisor in the quaternion algebra $Q_{\mathbb{Z}_{13}}.$
Suppose, by contradiction, that there exists a bi-periodic Perrin quaternion \(QR_{m}(11,13)\) which is a zero divisor in the quaternion algebra $Q_{\mathbb{Z}_{13}}.$ Then, as in the proof of Theorem \ref{4}, \(F_{k+1}\) must satisfy the congruence (\ref{3.2}), i.e.,
$63 F_{k+1}^2 + 26 F_{k+1} + 52 \equiv 0 \pmod{p}.$
Reducing modulo $13$, this simplifies to
$11 F_{k+1}^2 \equiv 0 \pmod{13}$. Multiplying both sides by $6$, the inverse of $11$ modulo $13$, we obtain $F_{k+1}^2 \equiv 0 \pmod{13}$ which implies $F_{k+1} \equiv 0 \pmod{13}.$ Thus, this is the only possible solution.

From the hypothesis, \(z(13) \mid (k+3)\), so \(13 \mid F_{k+3}\). 
By the Fibonacci recurrence, this implies \(13 \mid F_{k+2}\) and then by induction, all Fibonacci numbers would be divisible by 13, which is impossible. Hence, our assumption is false and every \(QR_m(11,13)\) is invertible in \(Q_{\mathbb{Z}_{13}}\).
\end{proof}

%%%%%%%%%%%%%%%%%%%%%%%%%%%%%%%%
\section{Conclusion}

In this paper, we introduced the bi-periodic Perrin sequence and established its relationship with the bi-periodic Padovan sequence. Extending these sequences to the quaternion setting, we analyzed their norms in the split algebra \(Q_{\mathbb{Z}_p}\). The main contribution of this work is the explicit characterization of zero divisors for bi-periodic Padovan and Perrin quaternions in $Q_{\mathbb{Z}_p}$. A key strength of our approach is that it applies to a general prime \(p\), rather than being limited to specific values. Within this setting, we explicitly characterized which bi-periodic Padovan and Perrin quaternions are zero divisors in \(Q_{\mathbb{Z}_p}\).  In particular, for \(p=13\), we showed that all bi-periodic Perrin quaternions of odd index $m$, satisfying $\frac{m-1}{2}\equiv -3 \pmod{z(13)},$ are invertible in quaternion algebra \(Q_{\mathbb{Z}_p}\).

As directions for future research, it would be of interest to investigate quaternion algebras associated with higher-dimensional $k$-periodic recurrence sequences for $k>2$, where the bi-periodic case corresponds to $k=2$. It would also be interesting to consider other choices of the coefficients $a$ and $b$ beyond the twin prime setting and to investigate whether analogous zero-divisor criteria can be obtained for other periodic recurrence sequences and related quaternion algebras.

\section*{Acknowledgements}
The authors thank the referee for several helpful comments. Elif Tan gratefully acknowledges the Transilvania University of Bra\c{s}ov for supporting her visit in January 2026.

%%%%%%%%%%%%%%%%%%%%%%%%%%%%%%%%

\end{document}